\documentclass[reqno]{amsart}
\usepackage{amssymb,amsmath,amsthm,xcolor,enumerate,hyperref,cleveref}

\synctex=1
\newtheorem{thrm}{Theorem}[section]

\newtheorem{lem}[thrm]{Lemma}
\newtheorem{prop}[thrm]{Proposition}

\theoremstyle{definition}

\newtheorem{rem}[thrm]{Remark}

\crefrangeformat{equation}{#3(#1)#4--#5(#2)#6}

\crefname{thrm}{Theorem}{Theorems}
\crefname{lem}{Lemma}{Lemmas}
\crefname{cor}{Corollary}{Corollaries}
\crefname{prop}{Proposition}{Propositions}
\crefname{defn}{Definition}{Definitions}
\crefname{exm}{Example}{Examples}
\crefname{rem}{Remark}{Remarks}
\crefname{section}{Section}{Sections}
\crefname{equation}{\unskip}{\unskip}
\crefname{enumi}{\unskip}{\unskip}

\renewcommand{\iff}{\Leftrightarrow}

\newcommand{\vf}{\varphi}
\newcommand{\0}{\theta}
\newcommand{\m}{^{-1}}
\newcommand{\tl}{\tilde}

\newcommand{\red}[1]{\textcolor{red}{#1}}

\begin{document}

\title[Jordan Isomorphisms of Finitary Incidence Algebras]{Jordan Isomorphisms\\ of Finitary Incidence Algebras}

\author{Rosali Brusamarello}
\address{Departamento de Matem\'atica, Universidade Estadual de Maring\'a, Maring\'a --- PR, CEP: 87020--900, Brazil}
\email{brusama@uem.br}

\author{\'Erica Z. Fornaroli}
\address{Departamento de Matem\'atica, Universidade Estadual de Maring\'a, Maring\'a --- PR, CEP: 87020--900, Brazil}
\email{ezancanella@uem.br}

\author{Mykola Khrypchenko}
\address{Departamento de Matem\'atica, Universidade Federal de Santa Catarina,  Campus Reitor Jo\~ao David Ferreira Lima, Florian\'opolis --- SC, CEP: 88040--900, Brazil}
\email{nskhripchenko@gmail.com}

\begin{abstract}
Let $X$ be a partially ordered set, $R$ a commutative $2$-torsionfree unital ring and $FI(X,R)$ the finitary incidence algebra of $X$ over $R$. In this note we prove that each $R$-linear Jordan isomorphism of $FI(X,R)$ onto an $R$-algebra $A$ is the near-sum of a homomorphism and an anti-homomorphism.
\end{abstract}

\subjclass[2010]{Primary 16S50, 17C50; Secondary 16W10}

\keywords{Jordan isomorphism, near-sum, homomorphism, anti-homomorphism, finitary incidence algebra}

\maketitle

\section*{Introduction}\label{intro}

The study of Jordan maps between rings originated in 1940's in the works by G.~Ancochea~\cite{Ancochea42,Ancochea47}, I.~Kaplansky~\cite{Kaplansky47} and L.~K.~Hua~\cite{Hua49}. In 1950's the topic was further developed by N.~Jacobson and C.~E.~Rickart in \cite{Jacobson-Rickart50}, by I.~N.~Herstein in \cite{Herstein56} and by M.~F.~Smiley in~\cite{Smiley57}. The description of Jordan homomorphisms turned out to be closely related to the description of homomorphisms and anti-homomorphisms. In particular, Jacobson and Rickart proved in~\cite[Theorem 7]{Jacobson-Rickart50} that, for $n\ge 2$, every Jordan homomorphism of the ring $M_n(R)$ of all $n\times n$-matrices over a ring $R$ into an arbitrary ring is the sum of a homomorphism and an anti-homomorphism. The classical theorem by Herstein says that every Jordan homomorphism of a ring onto a prime ring of characteristic different from $2$ and $3$ is either a homomorphism or an anti-homomorphism (see~\cite[Theorem H]{Herstein56}). Smiley improved in~\cite{
Smiley57} Herstein's result by eliminating the restriction that the characteristic be different from $3$. For further generalizations see the works of W.~E.~Baxter and W.~S.~Martindale III~\cite{Baxter-Martindale79}, M.~Bre\v{s}ar~\cite{Bresar89,Bresar91}, W.~S.~Martindale III~\cite{Martindale67,Martindale90}, K.~McCrimmon~\cite{McCrimmon89}.

L.~Moln\'ar and P.~\v{S}emrl~\cite{Molnar-Semrl98} initiated the investigation of Jordan maps on the ring $T_n(\mathcal C)$ of upper triangular matrices. They proved in~\cite[Corollary 4]{Molnar-Semrl98} that each Jordan automorphism of $T_n(\mathcal C)$ is either an automorphism, or an anti-automorphism, where $\mathcal C$ is a field with at least $3$ elements. K.~I.~Beidar, M.~Bre\v{s}ar and M.~A.~Che\-bo\-tar generalized this result in ~\cite{BeBreChe} by showing that each Jordan isomorphism of $T_n(\mathcal C)$ onto a $\mathcal C$-algebra is either an isomorphism, or an anti-isomorphism, provided that $\mathcal C$ is a $2$-torsionfree unital commutative ring without non-trivial idempotents and $n\ge 2$. D.~Benkovi\v{c} introduced in~\cite{Benkovic05} the notion of a near-sum, which helped him to describe in~\cite[Theorem 4.1]{Benkovic05} all Jordan homomorphisms $T_n(\mathcal C)\to A$, where $\mathcal C$ is an arbitrary $2$-torsionfree commutative ring and $A$ is a $\mathcal C$-algebra. E.~Akkurt, M.~Akkurt 
and G.~P.~Barker~\cite{Akkurts-Barker} extended Benkovi\v{c}'s result to structural matrix algebras $T_n(\mathcal C,\rho)$, where $\rho$ is either a partial order, or a quasi-order each of whose equivalence classes contains at least $2$ elements (see~\cite[Theorem 2.1]{Akkurts-Barker}). 

Observe that $T_n(\mathcal C,\rho)$ is isomorphic to the incidence algebra of the ordered set $(\{1, \ldots, n\},\rho)$ over the ring $\mathcal C$. In this paper we partially generalize the result by E.~Akkurt {\it et al.} to the case of finitary incidence algebras $FI(X,R)$, namely, we show that each $R$-linear Jordan isomorphism of $FI(X,R)$ onto an $R$-algebra $A$ is the near-sum of a homomorphism and an anti-homomorphism, where $X$ is an arbitrary partially ordered set and $R$ a commutative $2$-torsionfree unital ring.

The article is organized as follows. After giving all the necessary background information on Jordan homomorphisms and (finitary) incidence algebras in \cref{prelim}, we proceed with some technical lemmas in \cref{jiso-FI} which allow us to prove in \cref{vf|_D-homo} that each Jordan isomorphism $\vf:FI(X,R)\to A$ restricts to a homomorphism (and an anti-homomorphism) of the commutative subalgebra $D(X,R)$ consisting of the so-called diagonal elements. Furthermore, restricting $\vf$ to the subalgebra $\tilde I(X,R)$ generated by the matrix units \cref{e_xy}, one may directly apply the argument from the proof of~\cite[Theorem 2.1]{Akkurts-Barker} to decompose $\vf|_{\tilde I(X,R)}$ into the near sum $\psi+\theta$, where $\psi$ is a homomorphism $\tilde I(X,R)\to A$ and $\theta$ is an anti-homomorphism $\tilde I(X,R)\to A$. We show in \cref{psi-homo} that $\psi$ extends to a homomorphism $\tilde\psi:FI(X,R)\to A$, and an analogous fact is proved for $\theta$ in \cref{identities-for-theta} skipping some details.
 Our main result is \cref{near-sum}, which says that $\tilde\psi+\tilde\theta$ is a decomposition of $\vf$ into a near-sum.

\section{Preliminaries}\label{prelim}

\subsection{Jordan homomorphisms}\label{jordan-homo}
Let $A$ and $B$ be algebras over a commutative ring $R$. An $R$-linear map $\varphi:A\to B$ is called a \emph{Jordan homomorphism}, if it satisfies
\begin{align}
 \vf(a^2)&=\vf(a)^2,\label{vf(a^2)}\\
  \vf(aba)&=\vf(a)\vf(b)\vf(a),\label{vf(aba)}
\end{align}
for all $a,b\in A$. A bijective Jordan homomorphism is called a \emph{Jordan isomorphism}.

Each homomorphism, as well as an anti-homomorphism, is a Jordan homomorphism. The sum of a homomorphism $\psi:A\to B$ and an anti-homomorphism $\0:A\to B$ is a Jordan homomorphism, if $\psi(a)\0(b)=\0(a)\psi(b)=0$ for all $a,b\in A$. A more general construction was introduced by D.~Benkovi\v{c} in \cite{Benkovic05}. Suppose that $A$ can be represented as the direct sum of $R$-subspaces $A_0\oplus A_1$, where $A_0$ is a subalgebra of $A$ and $A_1$ is an ideal of $A$. Let $\psi:A\to B$ be a homomorphism and $\0:A\to B$ an anti-homomorphism, such that $\psi|_{A_0}=\0|_{A_0}$ and $\psi(a)\0(b)=\0(a)\psi(b)=0$ for all $a,b\in A_1$. Then the \emph{near-sum} of $\psi$ and $\0$ (with respect to $A_0$ and $A_1$) is the $R$-linear map $\vf:A\to B$, which satisfies $\vf|_{A_0}=\psi|_{A_0}=\0|_{A_0}$ and  $\vf|_{A_1}=\psi|_{A_1}+\0|_{A_1}$. One can show that $\vf$ is a Jordan homomorphism in this case.

Notice that the substitution of $a$ by $a+c$ in \cref{vf(aba)} gives
\begin{align}\label{vf(abc+cba)}
 \vf(abc+cba)=\vf(a)\vf(b)\vf(c)+\vf(c)\vf(b)\vf(a).
\end{align}
We shall also use the following fact (see Corollary 2 of \cite[Theorem 1]{Jacobson-Rickart50}). If $e$ is an idempotent, such that $ea=ae$, then
\begin{align}\label{vf(e)vf(a)=vf(a)vf(e)}
 \vf(a)\vf(e)=\vf(e)\vf(a)=\vf(ae).
\end{align}
In particular, if $A$ has identity $1$, then $\vf(1)$ is the identity of $\vf(A)$. Another particular case of \cref{vf(e)vf(a)=vf(a)vf(e)}: if $ea=ae=0$, then
\begin{align}\label{vf(e)vf(a)=0}
 \vf(e)\vf(a)=\vf(a)\vf(e)=0.
\end{align}

\subsection{Finitary incidence algebras}\label{fin-inc-alg}
Let $(X,\le)$ be a partially ordered set and $R$ a commutative ring with identity. The {\it incidence space $I(X,R)$ of $X$ over $R$} is defined to be the set of functions $f: X\times X\to R$, such that $f(x,y)=0$ if $x\not\le y$, with the natural structure of an $R$-module. An element $f\in I(X,R)$, such that for any interval $[x,y]\subseteq X$ there exists only a finite number of $[u,v]\subseteq[x,y]$ with $u<v$ and $f(u,v)\ne 0$, is called a {\it finitary series}~\cite{Khripchenko-Novikov09}. The subset $FI(X,R)$ of finitary series is clearly an $R$-submodule of $I(X,R)$. Moreover, for any pair of elements $f,g\in I(X,R)$, at least one of which belongs to $FI(X,R)$, one can define the product $fg$ as the convolution
\begin{align}\label{convolution}
(fg)(x,y)&=\sum_{x\le z\le y}f(x,z)g(z,y),
\end{align}
so that $FI(X,R)$ becomes an $R$-algebra, called the {\it finitary incidence algebra} of $X$ over $R$, and $I(X,R)$ a bimodule over $FI(X,R)$ (see~\cite[Theorem 1]{Khripchenko-Novikov09}). The identity element $\delta$ of $FI(X,R)$ is the function $\delta(x,y)=\delta_{xy}$ for $x\le y$, where $\delta_{xy}\in \{0,1\}$ is the Kronecker delta. Observe that, when $X$ is locally finite, $FI(X,R)=I(X,R)$ is the (classical) incidence algebra~\cite{SpDo}.

An element $f\in FI(X,R)$ is said to be {\it diagonal}, if $f(x,y)=0$ for $x\ne y$. Diagonal elements form a commutative subalgebra of $FI(X,R)$, which we denote by $D(X,R)$. We shall also work with $f\in I(X,R)$ satisfying $f(x,y)=0$ for $x=y$. Such elements form an $FI(X,R)$-submodule of $I(X,R)$ denoted by $Z(X,R)$. Clearly, each $f\in I(X,R)$ can be uniquely written as $f=f_D+f_Z$ with $f_D\in D(X,R)$ and $f_Z\in Z(X,R)$, so $I(X,R)=D(X,R)\oplus Z(X,R)$ as a module over $R$. Consequently, $FZ(X,R):=Z(X,R)\cap FI(X,R)$ is an ideal of $FI(X,R)$ and $FI(X,R)=D(X,R)\oplus FZ(X,R)$ as an $R$-module.

For each pair $x\le y$ define $e_{xy}\in FI(X,R)$ by
\begin{align}\label{e_xy}
e_{xy}(u,v)=
 \begin{cases}
  1, & \mbox{if $u=x$ and $v=y$},\\
  0, & \mbox{otherwise}.
 \end{cases}
\end{align}
Then $e_{xy}e_{uv}=\delta_{yu}e_{xv}$ by the definition of convolution. In particular, the elements $e_x:=e_{xx}\in D(X,R)$, $x\in X$, are pairwise orthogonal idempotents of $FI(X,R)$. Observe that for any $f\in I(X,R)$
\begin{align}\label{e_xfe_y}
 e_xfe_y=\begin{cases}
          f(x,y)e_{xy}, & \mbox{ if }x\le y,\\
          0, & \mbox{ otherwise}.
         \end{cases}
\end{align}
Consequently,
\begin{align}\label{f=g<=>e_xfe_y+e_yfe_x=e_xge_y+e_yge_x}
 f=g&\iff \forall x\le y:\ e_xfe_y=e_xge_y\notag\\
 &\iff \begin{cases}
        \forall x<y:& e_xfe_y+e_yfe_x=e_xge_y+e_yge_x,\\
        \forall x:& e_xfe_x=e_xge_x.
       \end{cases}
\end{align}
The subalgebra of $FI(X,R)$ generated by the functions $e_{xy}$ will be denoted by $\tilde I(X,R)$. As an $R$-module, it admits the decomposition $\tilde I(X,R)=\tilde D(X,R)\oplus \tilde Z(X,R)$, where $\tilde D(X,R)=\tilde I(X,R)\cap D(X,R)$ is a subalgebra of $\tilde I(X,R)$ and $\tilde Z(X,R)=\tilde I(X,R)\cap Z(X,R)$ is an ideal of $\tilde I(X,R)$.

Given $Y\subseteq X$, we introduce the notation $e_Y$ for the diagonal idempotent defined by
$$
e_Y(u,v)=\begin{cases}
          1, & \mbox{ if }u=v\in Y,\\
          0, & \mbox{ otherwise}.
         \end{cases}
$$
In particular, $e_x=e_{\{x\}}$. Note that $e_Ye_Z=e_{Y\cap Z}$, so $e_xe_Y=e_x$ for $x\in Y$, and $e_xe_Y=0$ otherwise.

\section{Jordan isomorphisms of \texorpdfstring{$FI(X,R)$}{FI(X,R)}}\label{jiso-FI}

In all what follows we assume that $(X,\le)$ is an arbitrary (non-necessarily locally finite) partially ordered set, $R$ is a commutative $2$-torsionfree unital ring and $A$ is an associative $R$-algebra. 

Let $\vf$ be a Jordan homomorphism from $FI(X,R)$ to $A$. Then its restriction to $\tilde I(X,R)$ is a Jordan homomorphism $\tilde I(X,R)\to A$. Following the proof of Theorem~2.1 from~\cite{Akkurts-Barker}, one sees that the $R$-linear maps
\begin{align}
 \psi(e_{xy})&=\vf(e_x)\vf(e_{xy})\vf(e_y),\label{psi(e_xy)}\\
 \0(e_{xy})&=\vf(e_y)\vf(e_{xy})\vf(e_x)\label{theta(e_xy)}
\end{align}
are, respectively, a homomorphism and an anti-homomorphism $\tilde I(X,R)\to A$. Moreover, $\vf|_{\tilde I(X,R)}$ is the near-sum of $\psi$ and $\0$ with respect to the subalgebra $\tilde D(X,R)$ and the ideal $\tilde Z(X,R)$ of $\tilde I(X,R)$.

When $\vf$ is bijective, our aim is to show that $\psi$ and $\0$ extend to a homomorphism $\tl\psi$ and an anti-homomorphism $\tl\0$ from $FI(X,R)$ to $A$ in such a way that $\vf$ is the near-sum $\tl\psi+\tl\0$ with respect to $D(X,R)$ and $FZ(X,R)$.

\subsection{The restriction of \texorpdfstring{$\vf$}{phi} to \texorpdfstring{$D(X,R)$}{D(X,R)}}\label{phi-on-D(XR)}
\begin{lem}\label{vf(e_xfe_y+e_yfe_x)}
Let $\vf:FI(X,R)\to A$ be a Jordan homomorphism. Then for any $f\in FI(X,R)$ one has
\begin{align}
\forall x<y:\ f(x,y)\vf(e_{xy})&=\vf(e_x)\vf(f)\vf(e_y)+\vf(e_y)\vf(f)\vf(e_x),\label{f(xy)vf(e_xy)=vf(e_x)vf(f)vf(e_y)+vf(e_y)vf(f)vf(e_x)}\\
\forall x:\ f(x,x)\vf(e_x)&=\vf(e_x)\vf(f)\vf(e_x).\label{f(xx)vf(e_x)=vf(e_x)vf(f)vf(e_x)}
\end{align}
\end{lem}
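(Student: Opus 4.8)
The plan is to express the matrix‑unit coefficients $f(x,y)$ through Peirce‑type products and then recognize these products as images of the triple product $e_xfe_y$ (and its companion $e_yfe_x$) under~$\vf$. Since $e_x$ and $e_y$ are idempotents with $e_xe_y=e_ye_x=0$ for $x\ne y$, I would first record, using~\cref{vf(e)vf(a)=vf(a)vf(e)} and~\cref{vf(e)vf(a)=0}, that $\vf(e_x)^2=\vf(e_x)$, that $\vf(e_x)$ and $\vf(f e_x)=\vf(e_x f)$ behave as expected, and that $\vf(e_x)\vf(e_y)=\vf(e_y)\vf(e_x)=0$. These are the basic ``Peirce'' relations that make the subsequent manipulations collapse to single terms.

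Next I would turn to~\cref{f=g<=>e_xfe_y+e_yfe_x=e_xge_y+e_yge_x} and~\cref{e_xfe_y}, which together say that for $x<y$ we have $e_xfe_y+e_yfe_x=f(x,y)e_{xy}+f(y,x)e_{yx}=f(x,y)e_{xy}$ (the term $f(y,x)$ vanishes because $y\not\le x$), while $e_xfe_x=f(x,x)e_x$. Now apply~\cref{vf(abc+cba)} with $a=e_x$, $b=f$, $c=e_y$: the left‑hand side becomes $\vf(e_xfe_y+e_yfe_x)=\vf(f(x,y)e_{xy})=f(x,y)\vf(e_{xy})$ by $R$‑linearity, and the right‑hand side is precisely $\vf(e_x)\vf(f)\vf(e_y)+\vf(e_y)\vf(f)\vf(e_x)$. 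This yields~\cref{f(xy)vf(e_xy)=vf(e_x)vf(f)vf(e_y)+vf(e_y)vf(f)vf(e_x)}. For the diagonal identity, apply~\cref{vf(aba)} with $a=e_x$, $b=f$: the left side is $\vf(e_xfe_x)=\vf(f(x,x)e_x)=f(x,x)\vf(e_x)$ and the right side is $\vf(e_x)\vf(f)\vf(e_x)$, giving~\cref{f(xx)vf(e_x)=vf(e_x)vf(f)vf(e_x)}.

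The only point requiring a little care is the vanishing of the ``wrong‑direction'' summand: one must genuinely invoke that $e_yfe_x=0$ when $x<y$ (equivalently $y\not\le x$, using antisymmetry of $\le$), rather than assume symmetry in $x,y$; this is immediate from~\cref{e_xfe_y}. Everything else is a direct substitution into~\cref{vf(aba)} and~\cref{vf(abc+cba)} together with $R$‑linearity of $\vf$, so I do not anticipate a serious obstacle — the lemma is essentially a bookkeeping identity that isolates, for each comparable pair, the single relevant entry of $f$ on the level of $\vf$. Note that finitariness of $f$ plays no role here; the statement and proof are valid for any $f\in FI(X,R)$ (indeed any $f\in I(X,R)$ for which the relevant triple products make sense).
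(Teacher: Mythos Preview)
Your argument is correct and matches the paper's proof exactly: apply \cref{vf(abc+cba)} with $a=e_x$, $b=f$, $c=e_y$ together with \cref{e_xfe_y} for the off-diagonal case, and \cref{vf(aba)} with \cref{e_xfe_y} for the diagonal case. The Peirce relations you record in the first paragraph are true but not actually needed here, so you can drop that preamble.
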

\begin{proof}
Observe from \cref{e_xfe_y} that $f(x,y)e_{xy}=e_xfe_y+e_yfe_x$ for $x<y$. Therefore, \cref{f(xy)vf(e_xy)=vf(e_x)vf(f)vf(e_y)+vf(e_y)vf(f)vf(e_x)} follows from \cref{vf(abc+cba)}. Similarly \cref{f(xx)vf(e_x)=vf(e_x)vf(f)vf(e_x)} is explained by \cref{vf(aba),e_xfe_y}.
\end{proof}

\begin{lem}\label{a=b-in-A}
 Let $\vf:FI(X,R)\to A$ be a Jordan isomorphism. Given $a,b\in A$, one has
 $$
  a=b\iff\begin{cases}
         \forall x<y:& \vf(e_x)a\vf(e_y)+\vf(e_y)a\vf(e_x)=\vf(e_x)b\vf(e_y)+\vf(e_y)b\vf(e_x),\\
         \forall x:& \vf(e_x)a\vf(e_x)=\vf(e_x)b\vf(e_x).
        \end{cases}
 $$
\end{lem}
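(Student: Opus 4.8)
The plan is to reduce the claimed equivalence to the corresponding equivalence for elements of $FI(X,R)$, namely \cref{f=g<=>e_xfe_y+e_yfe_x=e_xge_y+e_yge_x}, by transporting everything across the bijection $\vf$. The forward implication is trivial, so only the converse requires work. Since $\vf$ is surjective, write $a=\vf(f)$ and $b=\vf(g)$ for some $f,g\in FI(X,R)$; since $\vf$ is injective, it suffices to show that the hypothesis forces $f=g$.

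The key step is to rewrite each condition in the hypothesis in terms of $f$ and $g$. For $x<y$ we have $\vf(e_x)a\vf(e_y)+\vf(e_y)a\vf(e_x)=\vf(e_x)\vf(f)\vf(e_y)+\vf(e_y)\vf(f)\vf(e_x)$, which by \cref{f(xy)vf(e_xy)=vf(e_x)vf(f)vf(e_y)+vf(e_y)vf(f)vf(e_x)} of \cref{vf(e_xfe_y+e_yfe_x)} equals $f(x,y)\vf(e_{xy})$; similarly the right-hand side equals $g(x,y)\vf(e_{xy})$. Likewise, for each $x$ the condition $\vf(e_x)a\vf(e_x)=\vf(e_x)b\vf(e_x)$ becomes $f(x,x)\vf(e_x)=g(x,x)\vf(e_x)$ by \cref{f(xx)vf(e_x)=vf(e_x)vf(f)vf(e_x)}. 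Thus the hypothesis is equivalent to the two families of equalities
\begin{align*}
 \forall x<y:&\quad f(x,y)\vf(e_{xy})=g(x,y)\vf(e_{xy}),\\
 \forall x:&\quad f(x,x)\vf(e_x)=g(x,x)\vf(e_x).
\end{align*}

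It remains to deduce $e_xfe_y+e_yfe_x=e_xge_y+e_yge_x$ for $x<y$ and $e_xfe_x=e_xge_x$ for all $x$, after which \cref{f=g<=>e_xfe_y+e_yfe_x=e_xge_y+e_yge_x} gives $f=g$ and hence $a=b$. Using \cref{e_xfe_y}, for $x<y$ we have $e_xfe_y+e_yfe_x=f(x,y)e_{xy}$ and $e_xge_y+e_yge_x=g(x,y)e_{xy}$, so it is enough to show $f(x,y)=g(x,y)$; and $e_xfe_x=f(x,x)e_x$, so it is enough to show $f(x,x)=g(x,x)$. In other words, from $\bigl(f(x,y)-g(x,y)\bigr)\vf(e_{xy})=0$ (resp. $\bigl(f(x,x)-g(x,x)\bigr)\vf(e_x)=0$) we must conclude that the scalar coefficient vanishes, which amounts to showing that $\vf(e_{xy})$ and $\vf(e_x)$ are not annihilated by nonzero scalars acting on $A$. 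The main obstacle is thus precisely this injectivity-of-scalar-multiplication point: a priori $\vf$ being bijective as an $R$-module map does not immediately rule out $r\cdot\vf(e_{xy})=0$ for some $0\ne r\in R$. I expect this to follow from the $R$-linearity and injectivity of $\vf$ together with the fact that $e_{xy}$ (resp. $e_x$) has trivial annihilator in $R$ inside $FI(X,R)$: if $r\cdot\vf(e_{xy})=\vf(r e_{xy})=0$ then $r e_{xy}=0$ by injectivity, forcing $r=0$ since $e_{xy}\ne 0$ and $R$ acts faithfully on the free generator $e_{xy}$. Once this is in hand the argument closes.
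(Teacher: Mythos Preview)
Your argument is correct: the step you flagged as the ``main obstacle'' really does close, exactly as you sketch, since $r\vf(e_{xy})=\vf(re_{xy})$ by $R$-linearity, injectivity gives $re_{xy}=0$, and evaluating at $(x,y)$ yields $r=0$ because $e_{xy}(x,y)=1$ (and likewise for $e_x$). So there is no gap.

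That said, the paper's proof takes a shorter route. Rather than pushing the hypotheses forward through \cref{vf(e_xfe_y+e_yfe_x)} to obtain scalar equations in $A$ and then arguing about annihilators, the paper simply applies $\vf^{-1}$, which is itself a Jordan isomorphism and therefore satisfies \cref{vf(abc+cba),vf(aba)}. Writing $a=\vf(f)$, $b=\vf(g)$ and applying $\vf^{-1}$ to each hypothesis gives directly
\[
e_xfe_y+e_yfe_x=e_xge_y+e_yge_x\quad(x<y),\qquad e_xfe_x=e_xge_x\quad(\text{all }x),
\]
whence $f=g$ by \cref{f=g<=>e_xfe_y+e_yfe_x=e_xge_y+e_yge_x}. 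This bypasses the annihilator step entirely. Your approach has the minor advantage of not invoking the (standard but unstated) fact that the inverse of a Jordan isomorphism is again one; the paper's approach is cleaner and is the reason the proof there is a single sentence.
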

\begin{proof}
 The result follows from \cref{vf(abc+cba),vf(aba),f=g<=>e_xfe_y+e_yfe_x=e_xge_y+e_yge_x} and the bijectivity of $\vf$.
\end{proof}

\begin{prop}\label{vf|_D-homo}
Let $\vf:FI(X,R)\to A$ be a Jordan isomorphism. Then $\vf|_{D(X,R)}$ is a homomorphism (and an anti-homomorphism at the same time).
\end{prop}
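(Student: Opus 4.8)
The plan is to verify that $\vf$ restricted to $D(X,R)$ respects products, i.e.\ $\vf(fg)=\vf(f)\vf(g)$ for all $f,g\in D(X,R)$; since $D(X,R)$ is commutative, this automatically makes $\vf|_{D(X,R)}$ an anti-homomorphism as well. To check the equality $\vf(fg)=\vf(f)\vf(g)$ in $A$, I would invoke \cref{a=b-in-A}: it suffices to ``sandwich'' both sides between the idempotents $\vf(e_x)$ and $\vf(e_y)$ and show the resulting expressions agree for all $x<y$ and for all $x$. Thus the whole proof reduces to two computations, one diagonal (the $e_xae_x$ case) and one off-diagonal (the $e_xae_y+e_yae_x$ case).

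For the diagonal case, set $a=\vf(f)\vf(g)$ and $b=\vf(fg)$, where $f,g\in D(X,R)$. By \cref{f(xx)vf(e_x)=vf(e_x)vf(f)vf(e_x)} applied to $fg$, we have $\vf(e_x)\vf(fg)\vf(e_x)=(fg)(x,x)\vf(e_x)=f(x,x)g(x,x)\vf(e_x)$. On the other hand, using \cref{vf(e)vf(a)=vf(a)vf(e)} with the idempotent $e_x$ (which commutes with $f$ and with $g$ since both are diagonal), one gets $\vf(e_x)\vf(f)=\vf(e_xf)=f(x,x)\vf(e_x)$ and likewise $\vf(g)\vf(e_x)=g(x,x)\vf(e_x)$; sandwiching $\vf(f)\vf(g)$ and using $\vf(e_x)^2=\vf(e_x)$ then yields $\vf(e_x)\vf(f)\vf(g)\vf(e_x)=f(x,x)g(x,x)\vf(e_x)$ as well. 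So the two sandwiched diagonal expressions coincide.

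For the off-diagonal case, fix $x<y$ and again use \cref{vf(e)vf(a)=vf(a)vf(e)} repeatedly: because $e_x$ commutes with the diagonal element $f$, $\vf(e_x)\vf(f)=\vf(f)\vf(e_x)=f(x,x)\vf(e_x)$, and similarly for $e_y$ and for $g$. Hence $\vf(e_x)\vf(f)\vf(g)\vf(e_y)=f(x,x)\vf(e_x)\vf(g)\vf(e_y)$; but $\vf(e_x)\vf(g)=g(x,x)\vf(e_x)$, so this product equals $f(x,x)g(x,x)\vf(e_x)\vf(e_y)$. A subtlety here is that $\vf(e_x)\vf(e_y)$ need not vanish (the $e_x$ are orthogonal, but $\vf$ is only a Jordan map); however, the same manipulation applied with $f$ and $g$ interchanged in the order of multiplication, together with \cref{vf(e)vf(a)=vf(a)vf(e)}, shows $\vf(e_x)\vf(fg)\vf(e_y)=(fg)(x,x)\cdot\vf(e_x)\vf(e_y)=f(x,x)g(x,x)\vf(e_x)\vf(e_y)$, matching. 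Adding the analogous identity with $x$ and $y$ swapped gives $\vf(e_x)a\vf(e_y)+\vf(e_y)a\vf(e_x)=\vf(e_x)b\vf(e_y)+\vf(e_y)b\vf(e_x)$. By \cref{a=b-in-A} we conclude $a=b$, i.e.\ $\vf(fg)=\vf(f)\vf(g)$, and by commutativity of $D(X,R)$ also $\vf(fg)=\vf(gf)=\vf(g)\vf(f)$.

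I expect the main obstacle to be purely bookkeeping: one must be careful that $\vf$ is \emph{not} assumed multiplicative on all of $FI(X,R)$, so every step that turns $\vf(e_x)\vf(f)$ into a scalar multiple of $\vf(e_x)$ must be justified by \cref{vf(e)vf(a)=vf(a)vf(e)}, which applies precisely because $e_x$ is an idempotent commuting with the diagonal $f$. Once that observation is in place, the identity $\vf(e_x f)=f(x,x)\vf(e_x)$ (itself a special case of \cref{f(xx)vf(e_x)=vf(e_x)vf(f)vf(e_x)} combined with \cref{vf(e)vf(a)=vf(a)vf(e)}) does all the work, and no genuinely hard estimate is required.
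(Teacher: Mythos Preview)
Your proof is correct and follows the same approach as the paper: reduce $\vf(fg)=\vf(f)\vf(g)$ to the sandwich test of \cref{a=b-in-A}, then handle both the diagonal and off-diagonal cases using \cref{vf(e)vf(a)=vf(a)vf(e)} together with the fact that $e_x$ commutes with every diagonal element. One correction, though: your ``subtlety'' that $\vf(e_x)\vf(e_y)$ need not vanish is mistaken---since $e_x$ and $e_y$ are orthogonal idempotents (hence commute), \cref{vf(e)vf(a)=0} gives $\vf(e_x)\vf(e_y)=\vf(e_y)\vf(e_x)=0$ directly, and this is precisely what the paper invokes to conclude that all four off-diagonal sandwiched terms are zero rather than merely equal in pairs.
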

\begin{proof}
We shall use \cref{a=b-in-A} to prove that $\vf(fg)=\vf(f)\vf(g)$ for all $f,g\in D(X,R)$. Let $x<y$ be arbitrary elements of $X$. Since the idempotents $e_x,e_y$ belong to $D(X,R)$, and $D(X,R)$ is commutative, it follows from \cref{vf(e)vf(a)=vf(a)vf(e),vf(e)vf(a)=0} that
\begin{align*}
  \vf(e_x)\vf(fg)\vf(e_y)&=\vf(e_y)\vf(fg)\vf(e_x)=\vf(fg)\vf(e_x)\vf(e_y)=0,\\
  \vf(e_x)\vf(f)\vf(g)\vf(e_y)&=\vf(e_y)\vf(f)\vf(g)\vf(e_x)=\vf(f)\vf(e_x)\vf(e_y)\vf(g)=0.
\end{align*}
Now by \cref{f(xx)vf(e_x)=vf(e_x)vf(f)vf(e_x)} we have $\vf(e_x)\vf(fg)\vf(e_x)=(fg)(x,x)\vf(e_x)$, and taking into account \cref{vf(a^2),vf(e)vf(a)=vf(a)vf(e)}:
\begin{align*}
 \vf(e_x)\vf(f)\vf(g)\vf(e_x)&=\vf(e_x)\vf(f)\vf(e_x)\cdot\vf(e_x)\vf(g)\vf(e_x)\\
 &=f(x,x)\vf(e_x)g(x,x)\vf(e_x)=f(x,x)g(x,x)\vf(e_x).
\end{align*}
As $(fg)(x,x)=f(x,x)g(x,x)$, the result follows.
\end{proof}

\subsection{An extension of \texorpdfstring{$\psi$}{psi} to \texorpdfstring{$FI(X,R)$}{FI(X,R)}}\label{ext-of-psi}

\begin{lem}\label{f(xy)psi(e_xy)-and-f(xy)theta(e_xy)}
Let $\vf:FI(X,R)\to A$ be a Jordan homomorphism and $\psi$ be given by \cref{psi(e_xy)}. Then for all $f\in FI(X,R)$ and $x\le y$:
\begin{align}
 \vf(e_x)\vf(f)\vf(e_y)&=f(x,y)\psi(e_{xy}).\label{vf(e_x)vf(f)vf(e_y)}
\end{align}
\end{lem}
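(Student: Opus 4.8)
The plan is to verify the identity case by case, splitting according to whether $x=y$ or $x<y$, and in both cases to reduce to \cref{vf(e_xfe_y+e_yfe_x)}.

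For $x=y$, I would first note that since $e_x$ is an idempotent, \cref{vf(a^2)} gives $\vf(e_x)^2=\vf(e_x)$, hence $\psi(e_{xx})=\vf(e_x)\vf(e_x)\vf(e_x)=\vf(e_x)$. Thus the claimed equality becomes $\vf(e_x)\vf(f)\vf(e_x)=f(x,x)\vf(e_x)$, which is exactly \cref{f(xx)vf(e_x)=vf(e_x)vf(f)vf(e_x)}.

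For $x<y$, I would start from \cref{f(xy)vf(e_xy)=vf(e_x)vf(f)vf(e_y)+vf(e_y)vf(f)vf(e_x)}, namely $f(x,y)\vf(e_{xy})=\vf(e_x)\vf(f)\vf(e_y)+\vf(e_y)\vf(f)\vf(e_x)$, and multiply it on the left by $\vf(e_x)$ and on the right by $\vf(e_y)$. The left-hand side becomes $f(x,y)\vf(e_x)\vf(e_{xy})\vf(e_y)=f(x,y)\psi(e_{xy})$ by the definition \cref{psi(e_xy)}. On the right-hand side, using $\vf(e_x)^2=\vf(e_x)$ and $\vf(e_y)^2=\vf(e_y)$, the first summand collapses to $\vf(e_x)\vf(f)\vf(e_y)$, while the second becomes $\vf(e_x)\vf(e_y)\vf(f)\vf(e_x)\vf(e_y)$, which vanishes because $e_xe_y=e_ye_x=0$ forces $\vf(e_x)\vf(e_y)=0$ by \cref{vf(e)vf(a)=0}. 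Equating the two sides yields the lemma.

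I do not expect any genuine obstacle here: the argument is a short left/right multiplication of the identities already established in \cref{vf(e_xfe_y+e_yfe_x)}. The only points that require a bit of care are isolating the diagonal case $x=y$ and using the orthogonality relation $\vf(e_x)\vf(e_y)=0$ to discard the cross term in the off-diagonal case.
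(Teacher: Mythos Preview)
Your argument is correct and matches the paper's proof essentially line for line: both handle $x=y$ by reducing to \cref{f(xx)vf(e_x)=vf(e_x)vf(f)vf(e_x)}, and for $x<y$ both multiply \cref{f(xy)vf(e_xy)=vf(e_x)vf(f)vf(e_y)+vf(e_y)vf(f)vf(e_x)} on the left by $\vf(e_x)$ and on the right by $\vf(e_y)$, using idempotency and orthogonality of $\vf(e_x),\vf(e_y)$ to kill the cross term. Your write-up is in fact slightly more explicit than the paper's about why the second summand vanishes.
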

\begin{proof}
If $x=y$, then $\psi(e_x)=\vf(e_x)$, so \cref{vf(e_x)vf(f)vf(e_y)} is \cref{f(xx)vf(e_x)=vf(e_x)vf(f)vf(e_x)}. If $x<y$, then multiply \cref{f(xy)vf(e_xy)=vf(e_x)vf(f)vf(e_y)+vf(e_y)vf(f)vf(e_x)} by $\vf(e_y)$ on the right and by $\vf(e_x)$ on the left to get 
$$
f(x,y)\vf(e_x)\vf(e_{xy})\vf(e_y)=\vf(e_x)\vf(f)\vf(e_y).
$$
Now, \cref{vf(e_x)vf(f)vf(e_y)} follows from \cref{psi(e_xy)}. 
\end{proof}

\begin{prop}\label{identities-for-psi}
Let $\vf: FI(X,R)\to A$ be a Jordan isomorphism and $\psi$ be given by \cref{psi(e_xy)}. Then there exists an $R$-linear extension $\tl\psi$ of $\psi$ to the whole $FI(X,R)$. Moreover, for any $f\in FZ(X,R)$ one has
 \begin{align}
  \forall x<y:\ \vf(e_x)\tl\psi(f)\vf(e_y)&=\vf(e_x)\vf(f)\vf(e_y),\label{vf(e_x)psi(f)vf(e_y)}\\
  \forall x<y:\ \vf(e_y)\tl\psi(f)\vf(e_x)&=0,\label{vf(e_y)psi(f)vf(e_x)}\\
  \forall x:\ \vf(e_x)\tl\psi(f)\vf(e_x)&=0.\label{vf(e_x)psi(f)vf(e_x)}
 \end{align}
\end{prop}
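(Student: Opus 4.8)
The plan is to build $\tl\psi$ by precomposing $\vf$ with an $R$-linear ``truncation'' $FI(X,R)\to FI(X,R)$, rather than trying to extend $\psi$ termwise, since a literal series $\sum_{x<y}f(x,y)\psi(e_{xy})$ need not make sense in $A$. On the summand $D(X,R)$ of $FI(X,R)=D(X,R)\oplus FZ(X,R)$ this truncation will be the identity, so $\tl\psi|_{D(X,R)}=\vf|_{D(X,R)}$, which is consistent: $\vf|_{D(X,R)}$ is a homomorphism by \cref{vf|_D-homo} and coincides with $\psi$ on $\tl D(X,R)$, because the near-sum forces $\psi|_{\tl D(X,R)}=\vf|_{\tl D(X,R)}$. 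On $FZ(X,R)$ the truncation must, entrywise, keep only the ``strictly upper part'' of $\vf(f)$, and once $\tl\psi$ is in hand, identities \cref{vf(e_x)psi(f)vf(e_y),vf(e_y)psi(f)vf(e_x),vf(e_x)psi(f)vf(e_x)} will follow at once from \cref{vf(e_xfe_y+e_yfe_x),f(xy)psi(e_xy)-and-f(xy)theta(e_xy)}.

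The construction rests on one structural fact, which I expect to be the main obstacle: \emph{for each $x<y$ there is an idempotent $\varepsilon_{xy}\in R$ with $\psi(e_{xy})=\varepsilon_{xy}\vf(e_{xy})$} (hence $\0(e_{xy})=\vf(e_{xy})-\psi(e_{xy})=(1-\varepsilon_{xy})\vf(e_{xy})$, so $\varepsilon_{xy}\psi(e_{xy})=\psi(e_{xy})$ and $\varepsilon_{xy}\0(e_{xy})=0$). To obtain this I would restrict $\vf$ to the subalgebra $B_{xy}:=(e_x+e_y)FI(X,R)(e_x+e_y)$; since $x<y$ this is $Re_x\oplus Re_{xy}\oplus Re_y$, the upper triangular $2\times2$ matrix algebra over $R$. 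By \cref{vf(aba)} and the bijectivity of $\vf$, $\vf$ maps $B_{xy}$ bijectively onto $(\vf(e_x)+\vf(e_y))A(\vf(e_x)+\vf(e_y))$, so the latter equals $R\vf(e_x)\oplus R\vf(e_{xy})\oplus R\vf(e_y)$ with $\{\vf(e_x),\vf(e_{xy}),\vf(e_y)\}$ a free $R$-basis. Writing $\psi(e_{xy})=\vf(e_x)\vf(e_{xy})\vf(e_y)$ in this basis and using that it is fixed by left multiplication by $\vf(e_x)$ and by right multiplication by $\vf(e_y)$, together with $\vf(e_x)\vf(e_{xy})=\psi(e_{xy})=\vf(e_{xy})\vf(e_y)$ and $\vf(e_x)\vf(e_y)=0$ (consequences of \cref{vf(abc+cba),vf(e)vf(a)=0}), a coordinate comparison forces the coordinate of $\vf(e_{xy})$ to be idempotent and the other two to vanish. (This fact is likely also implicit in the construction of $\psi$ and $\0$ imported from~\cite{Akkurts-Barker}.)

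Given the $\varepsilon_{xy}$, for $g\in FI(X,R)$ define $\hat g\in I(X,R)$ by $\hat g(x,x)=g(x,x)$, $\hat g(x,y)=\varepsilon_{xy}g(x,y)$ for $x<y$, and $\hat g(u,v)=0$ for $u\not\le v$. The support of $\hat g$ lies in that of $g$, so $\hat g\in FI(X,R)$, and $g\mapsto\hat g$ is $R$-linear; put $\tl\psi(g):=\vf(\hat g)$. Then $\tl\psi$ is $R$-linear, and $\hat g=g$ for $g\in D(X,R)$ gives $\tl\psi|_{D(X,R)}=\vf|_{D(X,R)}$. Since $\tl\psi(e_x)=\vf(e_x)=\psi(e_x)$ and $\tl\psi(e_{xy})=\varepsilon_{xy}\vf(e_{xy})=\psi(e_{xy})$ for $x<y$, $R$-linearity yields $\tl\psi|_{\tl I(X,R)}=\psi$, so $\tl\psi$ is the desired extension.

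It remains to check \cref{vf(e_x)psi(f)vf(e_y),vf(e_y)psi(f)vf(e_x),vf(e_x)psi(f)vf(e_x)} for $f\in FZ(X,R)$. Applying \cref{vf(e_x)vf(f)vf(e_y)} to $\hat f$ gives, for $x<y$, $\vf(e_x)\tl\psi(f)\vf(e_y)=\hat f(x,y)\psi(e_{xy})=\varepsilon_{xy}f(x,y)\psi(e_{xy})=f(x,y)\psi(e_{xy})=\vf(e_x)\vf(f)\vf(e_y)$, i.e.\ \cref{vf(e_x)psi(f)vf(e_y)}. The analogue $\vf(e_y)\vf(g)\vf(e_x)=g(x,y)\0(e_{xy})$ of \cref{vf(e_x)vf(f)vf(e_y)} (obtained from \cref{f(xy)vf(e_xy)=vf(e_x)vf(f)vf(e_y)+vf(e_y)vf(f)vf(e_x)} by multiplying by $\vf(e_y)$ on the left and $\vf(e_x)$ on the right, just as in \cref{f(xy)psi(e_xy)-and-f(xy)theta(e_xy)}), applied to $\hat f$, gives $\vf(e_y)\tl\psi(f)\vf(e_x)=\hat f(x,y)\0(e_{xy})=\varepsilon_{xy}f(x,y)\0(e_{xy})=0$, i.e.\ \cref{vf(e_y)psi(f)vf(e_x)}; and \cref{f(xx)vf(e_x)=vf(e_x)vf(f)vf(e_x)} applied to $\hat f$ gives $\vf(e_x)\tl\psi(f)\vf(e_x)=\hat f(x,x)\vf(e_x)=f(x,x)\vf(e_x)=0$ since $f\in FZ(X,R)$, i.e.\ \cref{vf(e_x)psi(f)vf(e_x)}.
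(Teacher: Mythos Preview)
Your proof is correct, and the extension $\tl\psi$ you build in fact coincides with the paper's: the paper sets $g(x,y)=\vf^{-1}(a_{xy})(x,y)$ with $a_{xy}=\vf(e_x)\vf(f)\vf(e_y)$ and defines $\tl\psi(f)=\vf(g)$, and since $a_{xy}=f(x,y)\psi(e_{xy})=f(x,y)\varepsilon_{xy}\vf(e_{xy})$ one has $\vf^{-1}(a_{xy})=f(x,y)\varepsilon_{xy}e_{xy}$, i.e.\ $g=\hat f$. The difference is in presentation. You front-load a structural lemma---the existence of idempotents $\varepsilon_{xy}\in R$ with $\psi(e_{xy})=\varepsilon_{xy}\vf(e_{xy})$, proved via a Peirce-corner/basis argument in $(\vf(e_x)+\vf(e_y))A(\vf(e_x)+\vf(e_y))$---and then define the truncation $f\mapsto\hat f$ explicitly; this makes the verification of \cref{vf(e_x)psi(f)vf(e_y),vf(e_y)psi(f)vf(e_x),vf(e_x)psi(f)vf(e_x)} essentially a one-line application of \cref{f(xy)psi(e_xy)-and-f(xy)theta(e_xy)} together with $\varepsilon_{xy}\psi(e_{xy})=\psi(e_{xy})$ and $\varepsilon_{xy}\0(e_{xy})=0$. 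The paper instead treats $\vf^{-1}$ as a black box, never isolating the $\varepsilon_{xy}$; the price is a slightly longer verification (the computation \cref{vf^(-1)(a_xy),vf(e_x)psi(f)vf(e_y)+vf(e_y)psi(f)vf(e_x)=a_xy} plus left/right multiplications), but it avoids the extra lemma. Your idempotent description is a genuine gain in transparency, and it also makes the later near-sum identities $\tl\psi(f)\tl\0(f')=0$ morally obvious entrywise.
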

\begin{proof}
Given $f\in FZ(X,R)$ and $x\le y$, set
\begin{align}\label{a_xy}
  a_{xy}=\vf(e_x)\vf(f)\vf(e_y)\in A
\end{align}
and consider $g\in I(X,R)$ defined by 
\begin{align}\label{g(xy)}
g(x,y)=\vf\m(a_{xy})(x,y).
\end{align}
Observe that $g\in Z(X,R)$, as $a_{xx}=0$ by \cref{f(xx)vf(e_x)=vf(e_x)vf(f)vf(e_x)}. We claim that $g\in FZ(X,R)$. Indeed, suppose that $g(u,v)\ne 0$ for an infinite number of $[u,v]\subseteq[x,y]$ with $u<v$. In view of \cref{g(xy)} one has $\vf\m(a_{uv})\ne 0$ and thus $a_{uv}\ne 0$. Then  $f(u,v)\ne 0$ thanks to \cref{a_xy,vf(e_x)vf(f)vf(e_y)}, which contradicts the fact that $f$ is a finitary series. We now define
\begin{align}\label{psi(f)=vf(g)}
 \tl\psi(f)=\vf(g).
\end{align}
In the general situation, when $f\in FI(X,R)$, write $f=f_D+f_Z$ and thus set $\tl\psi(f)=\vf(f_D)+\tl\psi(f_Z)$.

To prove that $\tl\psi$ is linear, consider $f,g\in FI(X,R)$ and $\alpha\in R$. Then
$$
\tl\psi(\alpha f+g)=\varphi((\alpha f+g)_D)+\tl\psi((\alpha f+g)_Z)=\varphi(\alpha f_D+g_D) + \tl\psi(\alpha f_Z+g_Z).
$$
Since $\varphi$ is linear, it suffices to show that $\tl\psi(\alpha f_1+f_2)=\alpha \tl\psi(f_1)+\tl\psi(f_2)$ for $f_1,f_2\in FZ(X,R)$. As above, we set
$$
\begin{cases} 
a'_{xy}=\varphi(e_x)\varphi(f_1)\varphi(e_y) \\ 
a''_{xy}=\varphi(e_x)\varphi(f_2)\varphi(e_y)
\end{cases}\ \ 
\text{and} \ \ \
\begin{cases} 
g_1(x,y)=\varphi^{-1}(a'_{xy})(x,y) \\ 
g_2(x,y)=\varphi^{-1}(a''_{xy})(x,y)
\end{cases}\red{,}
$$
so that $\tl\psi(f_1)=\varphi(g_1)$ and $\tl\psi(f_2)=\varphi(g_2)$. Now,
$$ 
\alpha \tl\psi(f_1)+\tl\psi(f_2)=\alpha \varphi(g_1)+\varphi(g_2)=\varphi(\alpha g_1+g_2)=\tl\psi(\alpha f_1+f_2),
$$
where the last equality follows from
$$
(\alpha g_1+g_2)(x,y)=\varphi^{-1}(\alpha a'_{xy}+a''_{xy})(x,y) \ \ \text{and} \ \ \alpha a'_{xy}+a''_{xy}=\varphi(e_x)\varphi(\alpha f_1+f_2)\varphi(e_y).
$$

Now, we prove \cref{vf(e_x)psi(f)vf(e_y),vf(e_y)psi(f)vf(e_x),vf(e_x)psi(f)vf(e_x)}. Since $\vf(e_x)\tl\psi(f)\vf(e_x)=\vf(e_x)\vf(g)\vf(e_x)$ by \cref{psi(f)=vf(g)}, equality \cref{vf(e_x)psi(f)vf(e_x)} follows from \cref{f(xx)vf(e_x)=vf(e_x)vf(f)vf(e_x)} and the fact that $g\in FZ(X,R)$.
To prove \cref{vf(e_x)psi(f)vf(e_y),vf(e_y)psi(f)vf(e_x)}, observe from \cref{vf(a^2),vf(e)vf(a)=0} that for all $x<y$
\begin{align}
 \vf(e_x)a_{xy}\vf(e_y)&=a_{xy},\label{vf(e_x)a_xyvf(e_y)}\\
 \vf(e_y)a_{xy}\vf(e_x)&=0.\label{vf(e_y)a_xyvf(e_x)}
\end{align}
Since $\vf\m$ is a Jordan isomorphism,
\begin{align}
 \vf\m(a_{xy})&=\vf\m(\vf(e_x)a_{xy}\vf(e_y)+\vf(e_y)a_{xy}\vf(e_x))\notag\\
 &=e_x\vf\m(a_{xy})e_y+e_y\vf\m(a_{xy})e_x=\vf\m(a_{xy})(x,y)e_{xy}.\label{vf^(-1)(a_xy)}
\end{align}
Hence, by \cref{psi(f)=vf(g),vf^(-1)(a_xy)}
\begin{align}
 \vf(e_x)\tl\psi(f)\vf(e_y)+\vf(e_y)\tl\psi(f)\vf(e_x)&=\vf(e_x)\vf(g)\vf(e_y)+\vf(e_y)\vf(g)\vf(e_x)\notag\\
 &=\vf(e_xge_y+e_yge_x)=\vf(\vf\m(a_{xy})(x,y)e_{xy})\notag\\
 &=\vf(\vf\m(a_{xy}))=a_{xy}.\label{vf(e_x)psi(f)vf(e_y)+vf(e_y)psi(f)vf(e_x)=a_xy}
\end{align}
Multiplying this by $\vf(e_x)$ on the left and by $\vf(e_y)$ on the right and using \cref{vf(e_x)a_xyvf(e_y),a_xy} we get \cref{vf(e_x)psi(f)vf(e_y)}. Similarly, \cref{vf(e_y)a_xyvf(e_x)} and the multiplication of \cref{vf(e_x)psi(f)vf(e_y)+vf(e_y)psi(f)vf(e_x)=a_xy} by $\vf(e_y)$ on the left and by $\vf(e_x)$ on the right give \cref{vf(e_y)psi(f)vf(e_x)}.

We now show that $\tl\psi$ is an extension of $\psi$, i.\,e. it satisfies \cref{psi(e_xy)}. This is clearly true, when $x=y$, since $\tl\psi$ coincides with $\vf$ on $D(X,R)$. Let $x<y$, so that $e_{xy}\in FZ(X,R)$. We shall use \cref{a=b-in-A}. By \cref{vf(e_x)psi(f)vf(e_x)}, for any $u$,
$$
\vf(e_u)\tl\psi(e_{xy})\vf(e_u)=0=\vf(e_u)\vf(e_x)\vf(e_{xy})\vf(e_y)\vf(e_u),
$$
since either $u\ne x$ or $u\ne y$. Furthermore, by \cref{vf(e_x)psi(f)vf(e_y),vf(e_y)psi(f)vf(e_x)} for any $u<v$,
\begin{align*}
 \vf(e_u)\tl\psi(e_{xy})\vf(e_v)+\vf(e_v)\tl\psi(e_{xy})\vf(e_u)&=\vf(e_u)\vf(e_{xy})\vf(e_v).
\end{align*}
If $\{u,v\}\ne\{x,y\}$, then the latter is zero, since $e_ue_{xy}=e_{xy}e_u=0$ or $e_ve_{xy}=e_{xy}e_v=0$. But
$$
\vf(e_u)\vf(e_x)\vf(e_{xy})\vf(e_y)\vf(e_v)+\vf(e_v)\vf(e_x)\vf(e_{xy})\vf(e_y)\vf(e_u)
$$
is also zero in this case. Since $u<v$, it remains only one case to be checked: $u=x$ and $v=y$. In this situation,
\begin{align*}
 \vf(e_u)\vf(e_{xy})\vf(e_v)&=\vf(e_x)\vf(e_{xy})\vf(e_y)\\
 &=\vf(e_x)\vf(e_x)\vf(e_{xy})\vf(e_y)\vf(e_y)+\vf(e_y)\vf(e_x)\vf(e_{xy})\vf(e_y)\vf(e_x).
\end{align*}
\end{proof}

The proof that $\tl\psi$ respects multiplication will be divided into several lemmas.

\begin{lem} \label{f-in-D-and-g-in-Z}
Let $\vf:FI(X,R)\to A$ be a Jordan isomorphism and $\tl\psi:FI(X,R)\to A$ as defined in \cref{identities-for-psi}. If $f\in D(X,R)$ and  $g\in FZ(X,R)$, then $\tl\psi(fg)=\tl\psi(f)\tl\psi(g)$.
\end{lem}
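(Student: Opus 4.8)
The plan is to establish $\tl\psi(fg)=\tl\psi(f)\tl\psi(g)$ by means of \cref{a=b-in-A}, applied with $a=\tl\psi(fg)$ and $b=\vf(f)\tl\psi(g)$. Two preliminary remarks make this workable. First, $fg\in FZ(X,R)$: it lies in $FI(X,R)$ and $(fg)(x,x)=f(x,x)g(x,x)=0$ since $g\in FZ(X,R)$; hence the identities \cref{vf(e_x)psi(f)vf(e_y),vf(e_y)psi(f)vf(e_x),vf(e_x)psi(f)vf(e_x)} of \cref{identities-for-psi} are available for $fg$. Second, since $f\in D(X,R)$ we have $f_Z=0$, so $\tl\psi(f)=\vf(f)$ by the definition of $\tl\psi$, and thus $b=\vf(f)\tl\psi(g)$ is indeed $\tl\psi(f)\tl\psi(g)$; this is the equality to be proved.

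For the diagonal condition in \cref{a=b-in-A}, I would note that $\vf(e_x)\tl\psi(fg)\vf(e_x)=0$ is immediate from \cref{vf(e_x)psi(f)vf(e_x)}. On the other side, $e_x$ commutes with the diagonal element $f$, so \cref{vf(e)vf(a)=vf(a)vf(e)} gives $\vf(e_x)\vf(f)=\vf(f)\vf(e_x)$, whence $\vf(e_x)b\vf(e_x)=\vf(f)\,\vf(e_x)\tl\psi(g)\vf(e_x)=0$, again by \cref{vf(e_x)psi(f)vf(e_x)}.

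For the off-diagonal condition, fix $x<y$. Applying \cref{vf(e_x)psi(f)vf(e_y),vf(e_y)psi(f)vf(e_x)} to $fg$ and then \cref{vf(e_x)vf(f)vf(e_y)}, the left-hand side collapses to $\vf(e_x)\vf(fg)\vf(e_y)=(fg)(x,y)\psi(e_{xy})$, which by \cref{convolution} and $f\in D(X,R)$ equals $f(x,x)g(x,y)\psi(e_{xy})$. For the right-hand side I would push $\vf(f)$ past $\vf(e_x)$ and $\vf(e_y)$ via \cref{vf(e)vf(a)=vf(a)vf(e)}, then use \cref{vf(e_x)psi(f)vf(e_y),vf(e_y)psi(f)vf(e_x)} for $g$ and \cref{vf(e_x)vf(f)vf(e_y)} to reach $\vf(f)\vf(e_x)\vf(g)\vf(e_y)=g(x,y)\,\vf(f)\psi(e_{xy})$. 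It remains to identify $\vf(f)\psi(e_{xy})$ with $f(x,x)\psi(e_{xy})$: expanding $\psi(e_{xy})=\vf(e_x)\vf(e_{xy})\vf(e_y)$ by \cref{psi(e_xy)} and using $\vf(f)\vf(e_x)=\vf(fe_x)=f(x,x)\vf(e_x)$ (as $fe_x=f(x,x)e_x$, $f$ being diagonal, and invoking \cref{vf(e)vf(a)=vf(a)vf(e)}) settles it. Thus both sides of the off-diagonal equation equal $f(x,x)g(x,y)\psi(e_{xy})$, and \cref{a=b-in-A} yields the claim.

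The computations are short, and the only point that is more than pure bookkeeping is the last one, $\vf(f)\psi(e_{xy})=f(x,x)\psi(e_{xy})$: namely, that left multiplication by $\vf(f)$ scales the ``$(x,y)$-cell'' $\psi(e_{xy})$ by the scalar $f(x,x)$. This is exactly where one needs the commutation relation \cref{vf(e)vf(a)=vf(a)vf(e)} for the idempotent $e_x$ together with the trivial fact $fe_x=f(x,x)e_x$ for diagonal $f$; once these are in hand there is no real obstacle.
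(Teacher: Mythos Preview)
Your argument is correct and follows essentially the same route as the paper: apply \cref{a=b-in-A}, use \cref{vf(e_x)psi(f)vf(e_y),vf(e_y)psi(f)vf(e_x),vf(e_x)psi(f)vf(e_x)} for $fg\in FZ(X,R)$ and for $g$, and reduce the diagonal factor via $\vf(e_x)\vf(f)=\vf(e_xf)=f(x,x)\vf(e_x)$. The only cosmetic difference is that the paper treats the two off-diagonal summands separately (obtaining $f(x,x)g(x,y)\tl\psi(e_{xy})$ and $0$ directly), whereas you first commute $\vf(f)$ to the outside of both terms and then scale $\psi(e_{xy})$ by $f(x,x)$ at the end; the underlying identities are the same.
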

\begin{proof}
To prove the equality, we shall use \cref{a=b-in-A}. Let $x\le y$. If $x<y$, then by \cref{vf(e_x)psi(f)vf(e_y),vf(e_x)vf(f)vf(e_y),vf(e_y)psi(f)vf(e_x)}
\begin{align}\label{vf(e_x)psi(fg)vf(e_y)+vf(e_y)psi(fg)vf(e_x)}
\varphi(e_x)\tl\psi(fg)\varphi(e_y)+\varphi(e_y)\tl\psi(fg)\varphi(e_x)= (fg)(x,y)\tl\psi(e_{xy})= f(x,x)g(x,y)\tl\psi(e_{xy}).
\end{align}
Now, we need to compute 
\begin{align}\label{vf(e_x)psi(f)psi(g)vf(e_y)+vf(e_y)psi(f)psi(g)vf(e_x)}
\varphi(e_x)\tl\psi(f)\tl\psi(g)\vf(e_y)+\vf(e_y)\tl\psi(f)\tl\psi(g)\vf(e_x).
\end{align}
Since $e_x, f\in D(X,R)$, $e_xf=f(x,x)e_x$ and $\tl\psi|_{D(X,R)}=\vf|_{D(X,R)}$ is a homomorphism, we have by \cref{vf(e_x)psi(f)vf(e_y),vf(e_x)vf(f)vf(e_y)}
\begin{align*}
\varphi(e_x)\tl\psi(f)\tl\psi(g)\vf(e_y)&=\varphi(e_x)\vf(f)\tl\psi(g)\vf(e_y)=f(x,x)\varphi(e_x)\tl\psi(g)\vf(e_y)\\
&=f(x,x)g(x,y)\tl\psi(e_{xy}).
\end{align*}
Similarly by \cref{vf(e_y)psi(f)vf(e_x)}
\begin{align*}
\varphi(e_y)\tl\psi(f)\tl\psi(g)\vf(e_x)=f(y,y)\varphi(e_y)\tl\psi(g)\vf(e_x)=0,
\end{align*}
so \cref{vf(e_x)psi(f)psi(g)vf(e_y)+vf(e_y)psi(f)psi(g)vf(e_x)} coincides with the last term of \cref{vf(e_x)psi(fg)vf(e_y)+vf(e_y)psi(fg)vf(e_x)}.

If $x=y$, then $\vf(e_x)\tl\psi(fg)\vf(e_x)=0$ by \cref{vf(e_x)psi(f)vf(e_x)},
because $fg\in FZ(X,R)$. By the same reason
\begin{align*}
\varphi(e_x)\tl\psi(f)\tl\psi(g)\vf(e_x)=f(x,x)\vf(e_x)\tl\psi(g)\vf(e_x)=0.
\end{align*}
\end{proof}

\begin{rem}\label{f-in-Z-and-g-in-D}
By a similar computation one proves that if $f\in FZ(X,R)$ and $g\in D(X,R)$, then $\tl\psi(fg)=\tl\psi(f)\tl\psi(g)$.
\end{rem}
It remains to consider the case $f,g\in FZ(X,R)$. To treat it, we shall need two technical lemmas.

\begin{lem}\label{vf(abcde+...)}
Let $\vf:A\to B$ be a Jordan homomorphism. Given $a,b,c,d,e\in A$, one has
\begin{align*}
 \vf(abcde+edabc+cbade+edcba)&=\vf(a)\vf(b)\vf(c)\vf(d)\vf(e)+\vf(e)\vf(d)\vf(a)\vf(b)\vf(c)\\
&\quad+\vf(c)\vf(b)\vf(a)\vf(d)\vf(e)+\vf(e)\vf(d)\vf(c)\vf(b)\vf(a).
\end{align*}
\end{lem}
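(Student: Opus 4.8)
The plan is to derive this five-variable identity from the basic Jordan relations \cref{vf(a^2),vf(aba)} and their immediate consequence \cref{vf(abc+cba)} by a polarization (linearization) argument, exactly the way \cref{vf(abc+cba)} itself was obtained from \cref{vf(aba)}. The natural starting point is the known three-variable identity \cref{vf(aba)}, which I would iterate once to produce a five-fold product. Concretely, apply \cref{vf(aba)} with the middle term $b$ replaced by a product of the form $b c d$ and note that $\vf((abcda)\cdot\text{—})$ is not directly controlled; so instead I would start from the identity $\vf(a w a)=\vf(a)\vf(w)\vf(a)$ and substitute $w = b c d + d c b$, whose image is already known from \cref{vf(abc+cba)} to be $\vf(b)\vf(c)\vf(d)+\vf(d)\vf(c)\vf(b)$. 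This yields
\begin{align*}
 \vf(a(bcd+dcb)a)=\vf(a)\bigl(\vf(b)\vf(c)\vf(d)+\vf(d)\vf(c)\vf(b)\bigr)\vf(a),
\end{align*}
i.e. $\vf(abcda+adcba)=\vf(a)\vf(b)\vf(c)\vf(d)\vf(a)+\vf(a)\vf(d)\vf(c)\vf(b)\vf(a)$. This is a symmetric five-variable identity with the outer variable repeated; the target identity has two \emph{different} outer variables $a$ and $e$, so the next step is to polarize in $a$.

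The second step, then, is to replace $a$ by $a+e$ in the identity just obtained and collect the cross terms (the terms linear in $e$ and linear in $a$ together, after cancelling the pure-$a$ and pure-$e$ parts). On the left the cross terms of $\vf(a bcd a + a dcb a)$ under $a\mapsto a+e$ are $\vf(abcde + ebcda + adcbe + edcba)$. Re-indexing the dummy names $b,c,d$ consistently — the paper's target has the string $edabc$ and $cbade$, which arise from the substitution once one matches the roles correctly — gives precisely the left-hand side $\vf(abcde+edabc+cbade+edcba)$ after relabeling $(b,c,d)\mapsto(c,d,a)$ is performed in one of the two symmetric halves. I would carry out this relabeling carefully and present the final display as in the statement; the right-hand side cross terms are $\vf(a)\vf(b)\vf(c)\vf(d)\vf(e) + \vf(e)\vf(d)\vf(c)\vf(b)\vf(a)$ plus the corresponding pair from the second half, which after the same relabeling becomes $\vf(e)\vf(d)\vf(a)\vf(b)\vf(c) + \vf(c)\vf(b)\vf(a)\vf(d)\vf(e)$, matching the stated right-hand side.

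The one genuine subtlety — and the step I expect to be the main obstacle — is bookkeeping: making sure that the polarization produces exactly the four terms $abcde$, $edabc$, $cbade$, $edcba$ on the left and not some other arrangement, and that $2$-torsionfreeness is invoked correctly when a factor of $2$ appears (it will, since substituting $a\mapsto a+e$ into a quadratic-in-$a$ expression and also using $\vf((a+e)^2)=\vf(a+e)^2$ to clear square terms produces the relation $\vf(ae+ea)=\vf(a)\vf(e)+\vf(e)\vf(a)$ with no factor of $2$, but intermediate steps may). I would double-check the term matching by evaluating both sides on a small concrete example (say, in a matrix algebra with $\vf$ the identity map) to confirm the signs and the index pattern before committing to the displayed form. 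Modulo this careful tracking of monomials, the proof is a direct two-line polarization and requires nothing beyond \cref{vf(aba)} and \cref{vf(abc+cba)}.
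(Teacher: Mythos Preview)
Your polarization is correct but proves the \emph{wrong} identity. Substituting $w=bcd+dcb$ into $\vf(awa)=\vf(a)\vf(w)\vf(a)$ and then polarizing $a\mapsto a+e$ yields
\[
\vf(abcde+adcbe+ebcda+edcba)=\vf(a)\vf(b)\vf(c)\vf(d)\vf(e)+\vf(a)\vf(d)\vf(c)\vf(b)\vf(e)+\vf(e)\vf(b)\vf(c)\vf(d)\vf(a)+\vf(e)\vf(d)\vf(c)\vf(b)\vf(a),
\]
which is a valid consequence of \cref{vf(abc+cba)} but is \emph{not} the identity in the lemma: the middle two monomials on each side differ from the target ($adcbe$, $ebcda$ versus the required $edabc$, $cbade$). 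No permutation of the five variables carries one identity to the other, and your suggestion to ``relabel $(b,c,d)\mapsto(c,d,a)$ in one of the two symmetric halves'' is not a legitimate operation --- the variables $a,\dots,e$ are fixed throughout the statement, and relabeling part of an identity destroys it.

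The paper's proof avoids this by grouping differently: instead of placing the composite block in the \emph{middle} slot of \cref{vf(abc+cba)}, it places $abc$ (respectively $cba$) in the \emph{first} slot, giving
\[
\vf((abc)de+ed(abc))=\vf(abc)\vf(d)\vf(e)+\vf(e)\vf(d)\vf(abc),
\]
and the analogous line with $cba$. Adding these two and applying \cref{vf(abc+cba)} once more to $\vf(abc)+\vf(cba)=\vf(abc+cba)$ gives exactly the stated identity. No polarization step and no $2$-torsionfreeness are needed.
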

\begin{proof}
By \cref{vf(abc+cba)},
\begin{align*}
 \vf((abc)de+ed(abc))&=\vf(abc)\vf(d)\vf(e)+\vf(e)\vf(d)\vf(abc),\\
 \vf((cba)de+ed(cba))&=\vf(cba)\vf(d)\vf(e)+\vf(e)\vf(d)\vf(cba).
\end{align*}
It remains to add these equalities and use \cref{vf(abc+cba)} once again.
\end{proof}

\begin{lem}\label{vf(e_x)vf(f)vf(e_X-[xy])vf(g)vf(e_y)=0}
Let $\vf:FI(X,R)\to A$ be a Jordan isomorphism and $\tl\psi:FI(X,R)\to A$ as defined in \cref{identities-for-psi}. Given $f,g\in FZ(X,R)$ and $f',g'\in FZ(X,R)$ such that $\tl\psi(f)=\vf(f')$ and $\tl\psi(g)=\vf(g')$, one has for all $x\le y$ and $W\subseteq X\setminus\{z\in[x,y]\mid f'(x,z)\ne 0\ne g'(z,y)\}$
$$ 
\vf(e_x)\tl\psi(f)\vf(e_W)\tl\psi(g)\vf(e_y)=\vf(e_y)\tl\psi(f)\vf(e_W)\tl\psi(g)\vf(e_x)=0. 
$$
\end{lem}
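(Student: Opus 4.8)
The plan is to exploit the five-term identity of \cref{vf(abcde+...)}. Since $\tl\psi(f)=\vf(f')$ and $\tl\psi(g)=\vf(g')$, the two products in the statement are $\vf(e_x)\vf(f')\vf(e_W)\vf(g')\vf(e_y)$ and $\vf(e_y)\vf(f')\vf(e_W)\vf(g')\vf(e_x)$, so everything happens inside $A$ among $\vf$-images of elements of $FI(X,R)$. Throughout I would use, without further comment, that $\vf(e_x)$ and $\vf(e_y)$ are orthogonal idempotents for $x\ne y$ (by \cref{vf(e)vf(a)=0}), that $\vf(e_x)\vf(e_W)=\vf(e_xe_W)$ is a multiple of $\vf(e_x)$ (by \cref{vf(e)vf(a)=vf(a)vf(e)}, as $e_x,e_W$ are commuting idempotents), that $\vf(e_z)\vf(h)\vf(e_z)=h(z,z)\vf(e_z)$ by \cref{f(xx)vf(e_x)=vf(e_x)vf(f)vf(e_x)} (so it vanishes for $h\in FZ(X,R)$), and that $\vf(e_x)\vf(h)\vf(e_y)=h(x,y)\psi(e_{xy})$ for $x<y$ and any $h\in FI(X,R)$ by \cref{vf(e_x)vf(f)vf(e_y)}.

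First I would apply \cref{vf(abcde+...)} to the quintuple $(a,b,c,d,e)=(e_x,f',e_W,g',e_y)$ and multiply the resulting equality on the left by $\vf(e_x)$ and on the right by $\vf(e_y)$. Writing $M=e_xf'e_Wg'e_y+e_yg'e_xf'e_W+e_Wf'e_xg'e_y+e_yg'e_Wf'e_x\in FI(X,R)$ for the argument of $\vf$ on the left-hand side, the compressed left-hand side equals $M(x,y)\psi(e_{xy})$ (or $M(x,x)\vf(e_x)$ if $x=y$), and I would check that $M(x,y)=0$: the first summand contributes $\sum_{z\in W,\ x\le z\le y}f'(x,z)g'(z,y)$, which vanishes precisely because $W$ misses $\{z\in[x,y]\mid f'(x,z)\ne0\ne g'(z,y)\}$; the second and fourth summands are supported off row $y$ unless $x=y$, and for $x=y$ they contain the factor $e_xg'e_x=0$; the third summand vanishes at $(x,y)$ since $f'(x,x)=0$. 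On the compressed right-hand side, of the four products only $\vf(e_x)\vf(f')\vf(e_W)\vf(g')\vf(e_y)$ survives when $x<y$: the second and fourth acquire a factor $\vf(e_x)\vf(e_y)=0$, and the third a factor $\vf(e_x)\vf(e_W)\vf(f')\vf(e_x)$ which is a multiple of $\vf(e_x)\vf(f')\vf(e_x)=f'(x,x)\vf(e_x)=0$. Hence $\vf(e_x)\vf(f')\vf(e_W)\vf(g')\vf(e_y)=0$ when $x<y$; when $x=y$ the identity only yields the relation $\vf(e_x)\vf(f')\vf(e_W)\vf(g')\vf(e_x)=-\vf(e_x)\vf(g')\vf(e_W)\vf(f')\vf(e_x)$.

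It remains to handle the reversed product $\vf(e_y)\vf(f')\vf(e_W)\vf(g')\vf(e_x)$ for $x\le y$ and the forward product in the degenerate case $x=y$. Here the five-term identity alone is not enough — for these the relevant ``critical set'' would be $\{z\mid y<z<x\}$, empty precisely because $x\le y$ — so instead I would reduce to the case of a \emph{finite} $W$, after which the claim is immediate: for finite $W$ one has $\vf(e_W)=\sum_{w\in W}\vf(e_w)$, and each term $\bigl(\vf(e_a)\vf(f')\vf(e_w)\bigr)\bigl(\vf(e_w)\vf(g')\vf(e_b)\bigr)$ is, by \cref{vf(e_x)vf(f)vf(e_y),vf(e_y)psi(f)vf(e_x),vf(e_x)psi(f)vf(e_x)}, nonzero only if $a<w<b$, which is impossible for $b\le a$. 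The reduction rests on the truncation $\vf(e_x)\vf(f')\vf(e_W)=\vf(e_x)\vf(f')\vf(e_{W\cap V})$, where $V=\{z\mid x<z,\ f'(x,z)\ne0\}$, together with the symmetric statement $\vf(e_W)\vf(g')\vf(e_y)=\vf(e_{W\cap U})\vf(g')\vf(e_y)$ with $U=\{z\mid z<y,\ g'(z,y)\ne0\}$; granting these, $\vf(e_W)$ may be replaced by $\vf(e_{W\cap V\cap U})$, and $W\cap V\cap U$ is finite because $f',g'$ are finitary — in fact empty in the cases at hand, since $x\le y$. To prove $\vf(e_x)\vf(f')\vf(e_{W\setminus V})=0$ I would use \cref{vf(abc+cba)} to rewrite the left side as $\vf(e_x)\vf(e_{W\setminus V}f'e_x)$ (using $e_xf'e_{W\setminus V}=0$ and $\vf(e_x)\vf(e_{W\setminus V})\vf(f')\vf(e_x)=0$), and then invoke the near-sum decomposition $\vf(e_{ux})=\psi(e_{ux})+\0(e_{ux})$ together with the identities $\vf(e_x)\vf(e_{ux})=\0(e_{ux})$ and $f'(u,x)\0(e_{ux})=0$ for $u<x$ — the former from that decomposition and the (anti)homomorphism properties of $\psi,\0$ on $\tl I(X,R)$, the latter from \cref{vf(e_y)psi(f)vf(e_x),vf(e_x)vf(f)vf(e_y)} via \cref{vf(abc+cba)}.

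I expect the main obstacle to be exactly this last reduction to a finite $W$. Because $X$ is an arbitrary poset, $e_W$ is not a finite sum of the $e_w$, and a single row of $f'$ may carry infinitely many nonzero entries with no common upper bound, so neither $\vf(e_W)$ nor $\vf(f')$ can simply be expanded; the delicate step is to argue, via \cref{vf(abc+cba)} and the vanishing $f'(u,x)\0(e_{ux})=0$, that the (possibly infinite) part of $\vf(e_x)\vf(f')\vf(e_W)$ living in columns outside the finite set $V$ is annihilated.
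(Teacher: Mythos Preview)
Your treatment of the first product for $x<y$ matches the paper's: compress the five-term identity of \cref{vf(abcde+...)} between $\vf(e_x)$ and $\vf(e_y)$. For the reversed product and the case $x=y$ you and the paper diverge. The paper truncates the \emph{function}: it proves $\vf(e_x)\tl\psi(f)\vf(e_W)=\vf(e_x)\tl\psi(f_{>x})\vf(e_W)$ and its mirror via \cref{a=b-in-A}, then feeds the truncated $f_{>x},g_{<x}$ back through \cref{vf(abcde+...)} and finishes with the switching trick $(g_{<x})_{>x}=0$. Your plan truncates the \emph{set} $W$ instead, replacing it by $W\cap V\cap U$, which in those cases is empty on order grounds alone; this is a genuine shortcut, since it eliminates the second pass through the five-term identity.

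The obstacle you flag is real, but your proposed fix is both incomplete and more elaborate than necessary. You cannot expand $e_{W\setminus V}f'e_x$ as a sum of $f'(u,x)e_{ux}$ and apply the pointwise identities $\vf(e_x)\vf(e_{ux})=\0(e_{ux})$, $f'(u,x)\0(e_{ux})=0$ term by term when that sum may be infinite --- there is no continuity in $A$ to pass through. The cure is to apply \cref{a=b-in-A} to $\vf(e_x)\vf(f')\vf(e_{W\setminus V})$ directly: compressing by $\vf(e_u),\vf(e_v)$, the only potentially nonzero sandwiches arise when $x\in\{u,v\}$, and then either $u=x<v$ gives $[v\in W\setminus V]\,f'(x,v)\psi(e_{xv})=0$ since $v\notin V$, or $u<v=x$ gives $[u\in W\setminus V]\,\vf(e_x)\vf(f')\vf(e_u)$, which vanishes immediately by \cref{vf(e_y)psi(f)vf(e_x)} because $\vf(f')=\tl\psi(f)$. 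This is precisely the mechanism the paper uses to prove its own truncation identity, so at the technical core the two arguments coincide; the detour through the near-sum decomposition and $\0(e_{ux})$ is unnecessary.
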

\begin{proof}
Assume first that $x<y$. Let us apply \cref{vf(abcde+...)} with $a=e_x$, $b=f'$, $c=e_W$, $d=g'$ and $e=e_y$. The products $e_yg'e_xf'e_W$ and $e_yg'e_Wf'e_x$ are zero thanks to \cref{e_xfe_y}. The product $e_xf'e_Wg'e_y$ equals $(f'e_Wg')(x,y)e_{xy}$, which is also zero by \cref{convolution}. Thus,
\begin{align}
\vf(e_Wf'e_xg'e_y)&=\vf(e_x)\vf(f')\vf(e_W)\vf(g')\vf(e_y)\notag\\
 &\quad+\vf(e_y)\vf(g')\vf(e_x)\vf(f')\vf(e_W)\notag\\
 &\quad+\vf(e_W)\vf(f')\vf(e_x)\vf(g')\vf(e_y)\notag\\
 &\quad+\vf(e_y)\vf(g')\vf(e_W)\vf(f')\vf(e_x).\label{vf(e_X-minus[xy]fe_xge_y)}
\end{align}
Suppose first that $x\in W$. Multiplying \cref{vf(e_X-minus[xy]fe_xge_y)} by $\vf(e_x)$ on the left and using \cref{vf(e)vf(a)=0}, we get zero on the left-hand side of \cref{vf(e_X-minus[xy]fe_xge_y)}, as $e_xe_Wf'e_x=f'(x,x)e_x=0$ and $e_ye_x=0$. The second and fourth terms of the right-hand side of \cref{vf(e_X-minus[xy]fe_xge_y)} are also zero by \cref{vf(e)vf(a)=0}, the third one is zero by \cref{vf(e_x)vf(f)vf(e_y),vf|_D-homo}, while the first one does not change by \cref{vf(a^2)}. Similarly the multiplication of \cref{vf(e_X-minus[xy]fe_xge_y)} by $\vf(e_x)$ on the right gives $\vf(e_y)\vf(g')\vf(e_W)\vf(f')\vf(e_x)=0$. It remains to replace $\vf(f')$ by $\tl\psi(f)$ and $\vf(g')$ by $\tl\psi(g)$. The same argument works, when $x\not\in W$, since $e_xe_W=e_We_x=0$ in this case.

Now let $x=y$. Observe that $W$ is an arbitrary subset of $X$ in this case. We first prove the following:
\begin{align}\label{vf(f)-to-vf(f_>x)}
\vf(e_x)\tl\psi(f)\vf(e_W)=\vf(e_x)\tl\psi(f_{>x})\vf(e_W),
\end{align}
where
\begin{align}\label{f_>x}
 f_{>x}(u,v)=\begin{cases}
             f(u,v), & \mbox{ if $u=x$ and $v>x$},\\
             0, & \mbox{ otherwise}.
            \end{cases}
\end{align}
To this end we use \cref{a=b-in-A}. Thanks to \cref{vf(e)vf(a)=0,vf(e_x)psi(f)vf(e_x)}, for any $u$ both sides of \cref{vf(f)-to-vf(f_>x)} become zero after multiplication by $\vf(e_u)$ on the left and on the right. Now take $u<v$ and consider
\begin{align}\label{vf(e_u)vf(f)vf(e_v)-to-vf(e_u)vf(f_>x)vf(e_v)}
\vf(e_u)\vf(e_x)\tl\psi(f)\vf(e_W)\vf(e_v)+\vf(e_v)\vf(e_x)\tl\psi(f)\vf(e_W)\vf(e_u).
\end{align}
If $x\not\in\{u,v\}$ or $\{u,v\}\cap W=\emptyset$, then \cref{vf(e_u)vf(f)vf(e_v)-to-vf(e_u)vf(f_>x)vf(e_v)} is zero for any $f$ by \cref{vf(e)vf(a)=0}. Let $x=u<v\in W$. Then the sum \cref{vf(e_u)vf(f)vf(e_v)-to-vf(e_u)vf(f_>x)vf(e_v)} is $\vf(e_x)\tl\psi(f)\vf(e_v)$ by \cref{vf(a^2),vf(e)vf(a)=0}, which by \cref{vf(e_x)psi(f)vf(e_y),vf(e_x)vf(f)vf(e_y)} equals
$$
 f(x,v)\tl\psi(e_{xv})=f_{>x}(x,v)\tl\psi(e_{xv})=\vf(e_x)\tl\psi(f_{>x})\vf(e_v),
$$
so $f$ can be replaced by $f_{>x}$ in \cref{vf(e_u)vf(f)vf(e_v)-to-vf(e_u)vf(f_>x)vf(e_v)}. If $W\ni u<v=x$, then \cref{vf(e_u)vf(f)vf(e_v)-to-vf(e_u)vf(f_>x)vf(e_v)} becomes $\vf(e_x)\tl\psi(f)\vf(e_u)$, which is zero for any $f\in FZ(X,R)$ by \cref{vf(e_y)psi(f)vf(e_x)}. If $W\ni x=u<v\not\in W$ or $W\not\ni u<v=x\in W$, then \cref{vf(e_u)vf(f)vf(e_v)-to-vf(e_u)vf(f_>x)vf(e_v)} is again zero for any $f$.

Similarly we get
\begin{align}\label{vf(f)-to-vf(f_<x)}
\vf(e_W)\tl\psi(f)\vf(e_x)=\vf(e_W)\tl\psi(f_{<x})\vf(e_x),
\end{align}
where
\begin{align}\label{f_<x}
 f_{<x}(u,v)=\begin{cases}
             f(u,v), & \mbox{ if $u<x$ and $v=x$},\\
             0, & \mbox{ otherwise}.
            \end{cases}
\end{align}
Therefore, using \cref{vf(a^2)} we deduce from \cref{vf(f)-to-vf(f_>x),vf(f)-to-vf(f_<x)}
\begin{align}\label{vf(e_x)vf(f)vf(e_X-x)vf(g)vf(e_x)=vf(e_x)vf(f_>x)vf(e_X-x)vf(g_<x)vf(e_x)}
 \vf(e_x)\tl\psi(f)\vf(e_W)\tl\psi(g)\vf(e_x)=\vf(e_x)\tl\psi(f_{>x})\vf(e_W)\tl\psi(g_{<x})\vf(e_x).
\end{align}

Now write $\tl\psi(f_{>x})=\vf(f'')$, $\tl\psi(g_{<x})=\vf(g'')$ for some $f'',g''\in FZ(X,R)$ and apply \cref{vf(abcde+...)} with $a=e_x$, $b=f''$, $c=e_W$, $d=g''$ and $e=e_x$. The products $e_xg''e_xf''e_W$ and $e_Wf''e_xg''e_x$ are zero by \cref{e_xfe_y} and the fact that $g''\in FZ(X,R)$. Similarly $e_xf''e_Wg''e_x=e_xg''e_Wf''e_x=0$, since $f''e_Wg'',g''e_Wf''\in FZ(X,R)$. Hence,
\begin{align}
 0&=\vf(e_x)\vf(f'')\vf(e_W)\vf(g'')\vf(e_x)
 +\vf(e_x)\vf(g'')\vf(e_x)\vf(f'')\vf(e_W)\notag\\
 &\quad+\vf(e_W)\vf(f'')\vf(e_x)\vf(g'')\vf(e_x)+\vf(e_x)\vf(g'')\vf(e_W)\vf(f'')\vf(e_x).\label{vf(e_x)vf(f_>x)vf(e_X-x)vf(g_<x)vf(e_x)+...=0}
\end{align}
By \cref{vf(e_x)vf(f)vf(e_y)} the second and third summands of \cref{vf(e_x)vf(f_>x)vf(e_X-x)vf(g_<x)vf(e_x)+...=0} are zero, because $g''\in FZ(X,R)$. Thus,
\begin{align}\label{psi><}
\vf(e_x)\tl\psi(f_{>x})\vf(e_W)\tl\psi(g_{<x})\vf(e_x)+\vf(e_x)\tl\psi(g_{<x})\vf(e_W)\tl\psi(f_{>x})\vf(e_x)=0.
\end{align}
But
$$ \vf(e_x)\tl\psi(g_{<x})\vf(e_W)\tl\psi(f_{>x})\vf(e_x)=\vf(e_x)\tl\psi((g_{<x})_{>x})\vf(e_W)\tl\psi((f_{>x})_{<x})\vf(e_x)
$$
thanks to \cref{vf(e_x)vf(f)vf(e_X-x)vf(g)vf(e_x)=vf(e_x)vf(f_>x)vf(e_X-x)vf(g_<x)vf(e_x)}, which is zero, as $(g_{<x})_{>x}=(f_{>x})_{<x}=0$ by \cref{f_<x,f_>x}. So,
$$
\vf(e_x)\tl\psi(f_{>x})\vf(e_W)\tl\psi(g_{<x})\vf(e_x)=0,
$$
which in view of \cref{vf(e_x)vf(f)vf(e_X-x)vf(g)vf(e_x)=vf(e_x)vf(f_>x)vf(e_X-x)vf(g_<x)vf(e_x)} gives the desired equality.
\end{proof}

\begin{lem}\label{f-in-Z-and-g-in-Z}
Let $\vf:FI(X,R)\to R$ be a Jordan isomorphism and $\tl\psi:FI(X,R)\to A$ as defined in \cref{identities-for-psi}. If $f,g\in FZ(X,R)$, then $\tl\psi(fg)=\tl\psi(f)\tl\psi(g)$.
\end{lem}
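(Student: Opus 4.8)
The plan is to invoke \cref{a=b-in-A} with $a=\tl\psi(fg)$ and $b=\tl\psi(f)\tl\psi(g)$, so that it suffices to verify its two defining conditions for this $a$ and $b$ --- the diagonal one ($\vf(e_x)a\vf(e_x)=\vf(e_x)b\vf(e_x)$ for all $x$) and the off-diagonal one ($\vf(e_x)a\vf(e_y)+\vf(e_y)a\vf(e_x)=\vf(e_x)b\vf(e_y)+\vf(e_y)b\vf(e_x)$ for all $x<y$). By \cref{identities-for-psi} we may write $\tl\psi(f)=\vf(f')$ and $\tl\psi(g)=\vf(g')$ with $f',g'\in FZ(X,R)$; set $S_{xy}=\{z\in[x,y]\mid f'(x,z)\ne 0\ne g'(z,y)\}$, which is a \emph{finite} set since $f'$ is finitary, and observe that each $z\in S_{xy}$ satisfies $x<z<y$ because $f',g'\in FZ(X,R)$ (in particular $S_{xx}=\emptyset$). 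The decisive structural fact is that $e_X=\delta$, hence $\vf(e_X)$ is the identity of $A$ by the remark following \cref{vf(e)vf(a)=vf(a)vf(e)} together with the surjectivity of $\vf$; this allows me to insert $\vf(e_X)=\vf(e_{X\setminus S_{xy}})+\sum_{z\in S_{xy}}\vf(e_z)$ between $\tl\psi(f)$ and $\tl\psi(g)$, thereby separating the infinite complement from the finite set $S_{xy}$.

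First I would dispose of the diagonal condition. Since $FZ(X,R)$ is an ideal, $fg\in FZ(X,R)$, so $\vf(e_x)\tl\psi(fg)\vf(e_x)=0$ by \cref{vf(e_x)psi(f)vf(e_x)}; and $\vf(e_x)\tl\psi(f)\tl\psi(g)\vf(e_x)=\vf(e_x)\tl\psi(f)\vf(e_X)\tl\psi(g)\vf(e_x)=0$ by \cref{vf(e_x)vf(f)vf(e_X-[xy])vf(g)vf(e_y)=0} with $W=X$, admissible because $S_{xx}=\emptyset$. Next, in the off-diagonal condition the summands of the shape $\vf(e_y)(\,\cdot\,)\vf(e_x)$ with $x<y$ all vanish: $\vf(e_y)\tl\psi(fg)\vf(e_x)=0$ by \cref{vf(e_y)psi(f)vf(e_x)}, while after inserting and splitting $\vf(e_X)$ the complement piece $\vf(e_y)\tl\psi(f)\vf(e_{X\setminus S_{xy}})\tl\psi(g)\vf(e_x)$ vanishes by \cref{vf(e_x)vf(f)vf(e_X-[xy])vf(g)vf(e_y)=0} and each $\vf(e_y)\tl\psi(f)\vf(e_z)\tl\psi(g)\vf(e_x)$ with $z\in S_{xy}$ vanishes by \cref{vf(e_y)psi(f)vf(e_x)} because $z<y$.

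It then remains to prove $\vf(e_x)\tl\psi(fg)\vf(e_y)=\vf(e_x)\tl\psi(f)\tl\psi(g)\vf(e_y)$ for $x<y$. By \cref{vf(e_x)psi(f)vf(e_y),vf(e_x)vf(f)vf(e_y)} the left-hand side equals $(fg)(x,y)\psi(e_{xy})$. For the right-hand side, insert and split $\vf(e_X)$: the complement piece vanishes by \cref{vf(e_x)vf(f)vf(e_X-[xy])vf(g)vf(e_y)=0}, and for $z\in S_{xy}$, using \cref{vf(e_x)psi(f)vf(e_y),vf(e_x)vf(f)vf(e_y)}, the idempotency $\vf(e_z)^2=\vf(e_z)$ (from \cref{vf(a^2)}) and the multiplicativity of $\psi$ (so that $\psi(e_{xz})\psi(e_{zy})=\psi(e_{xy})$), one gets $\vf(e_x)\tl\psi(f)\vf(e_z)\tl\psi(g)\vf(e_y)=f(x,z)g(z,y)\psi(e_{xy})$; hence $\vf(e_x)\tl\psi(f)\tl\psi(g)\vf(e_y)=\sum_{z\in S_{xy}}f(x,z)g(z,y)\psi(e_{xy})$. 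To reconcile the two answers, recall that $(fg)(x,y)=\sum_{x<z<y}f(x,z)g(z,y)$ by \cref{convolution} and $f,g\in FZ(X,R)$, a \emph{finite} sum because $f$ is finitary; for each $z$ in this range with $z\notin S_{xy}$, applying \cref{vf(e_x)vf(f)vf(e_X-[xy])vf(g)vf(e_y)=0} with $W=\{z\}$ and the computation just made forces $f(x,z)g(z,y)\psi(e_{xy})=0$, so these spurious terms drop out and $(fg)(x,y)\psi(e_{xy})=\sum_{z\in S_{xy}}f(x,z)g(z,y)\psi(e_{xy})$, as required. By \cref{a=b-in-A} this yields $\tl\psi(fg)=\tl\psi(f)\tl\psi(g)$.

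The main obstacle, in comparison with the locally finite (structural matrix) case, is precisely that $X$ may be infinite, so $\delta$ is not a finite sum of the $e_z$ and a naive ``resolution of the identity'' between $\tl\psi(f)$ and $\tl\psi(g)$ is unavailable. The way around it is to use $\vf(\delta)=1_A$, peel off the finite set $S_{xy}$ carrying all potentially nonzero contributions, and absorb the infinite remainder into \cref{vf(e_x)vf(f)vf(e_X-[xy])vf(g)vf(e_y)=0}; a secondary nuisance is that $S_{xy}$ is defined through $f'$ and $g'$ rather than $f$ and $g$, which is exactly why the final cancellation of spurious terms is needed.
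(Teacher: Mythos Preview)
Your proof is correct and follows the same strategy as the paper: invoke \cref{a=b-in-A}, insert $\vf(\delta)=\vf(e_X)$ between $\tl\psi(f)$ and $\tl\psi(g)$, peel off a finite subset of $[x,y]$, and kill the infinite remainder with \cref{vf(e_x)vf(f)vf(e_X-[xy])vf(g)vf(e_y)=0}. The only noteworthy difference is the choice of the finite set: the paper takes $Z=\{z\in[x,y]\mid f(x,z)\ne 0\ne g(z,y)\}\cup\{z\in[x,y]\mid f'(x,z)\ne 0\ne g'(z,y)\}$, which makes $(fg)(x,y)\tl\psi(e_{xy})$ and $\sum_{z\in Z}\vf(e_x)\tl\psi(f)\vf(e_z)\tl\psi(g)\vf(e_y)$ match on the nose, whereas you take only the second of these two sets and then have to argue separately (via \cref{vf(e_x)vf(f)vf(e_X-[xy])vf(g)vf(e_y)=0} with $W=\{z\}$) that the ``spurious'' terms $f(x,z)g(z,y)\psi(e_{xy})$ with $z\notin S_{xy}$ vanish. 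Both are fine; the paper's larger $Z$ simply absorbs your final cancellation step into the choice of the splitting set.
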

\begin{proof}
As in the proof of \cref{f-in-D-and-g-in-Z} for $x<y$ we have
$$
\varphi(e_x)\tl\psi(fg)\varphi(e_y)+\varphi(e_y)\tl\psi(fg)\varphi(e_x)= (fg)(x,y)\tl\psi(e_{xy}),
$$
the latter being
\begin{align*}
\sum_{z\in Z}f(x,z)\tl\psi(e_{xz})g(z,y)\tl\psi(e_{zy})
&= \sum_{z\in Z}\vf(e_x)\vf(f)\vf(e_z)\vf(g)\vf(e_y)\\
&=\vf(e_x)\vf(f)\vf(e_Z)\vf(g)\vf(e_y),
\end{align*}
where $Z$ is any finite subset of $[x,y]$ which contains $\{z\in[x,y]\mid f(x,z)\ne 0\ne g(z,y)\}$. In particular, we may take
$$
Z=\{z\in[x,y]\mid f(x,z)\ne 0\ne g(z,y)\}\cup\{z\in[x,y]\mid f'(x,z)\ne 0\ne g'(z,y)\}
$$
with $f',g'\in FZ(X,R)$ being such that $\tl\psi(f)=\vf(f')$ and $\tl\psi(g)=\vf(g')$.

On the other hand,
\begin{align}
\vf(e_x)\tl\psi(f)\tl\psi(g)\vf(e_y)&= \vf(e_x)\tl\psi(f)\vf(e_Z)\tl\psi(g)\vf(e_y)\notag\\
&\quad+\vf(e_x)\tl\psi(f)\vf(e_{X\setminus Z})\tl\psi(g)\vf(e_y),\label{vf(e_x)psi(f)psi(g)vf(e_y)}\\
\vf(e_y)\tl\psi(f)\tl\psi(g)\vf(e_x)&= \vf(e_y)\tl\psi(f)\vf(e_Z)\tl\psi(g)\vf(e_x)\notag\\
&\quad+\vf(e_y)\tl\psi(f)\vf(e_{X\setminus Z})\tl\psi(g)\vf(e_x).\label{vf(e_y)psi(f)psi(g)vf(e_x)}
\end{align}
By \cref{vf(e_x)vf(f)vf(e_X-[xy])vf(g)vf(e_y)=0} the second summands of the right-hand sides of \cref{vf(e_x)psi(f)psi(g)vf(e_y),vf(e_y)psi(f)psi(g)vf(e_x)} are zero. Moreover,
$$
\vf(e_y)\tl\psi(f)\vf(e_Z)\tl\psi(g)\vf(e_x)=\sum_{z\in Z} \vf(e_y)\tl\psi(f)\vf(e_z)\tl\psi(g)\vf(e_x)=0,
$$
because $\vf(e_z)\tl\psi(g)\vf(e_x)=0$ for all $x\leq z$ by \cref{vf(e_y)psi(f)vf(e_x),vf(e_x)psi(f)vf(e_x)}. Finally,
by \cref{vf(e_x)psi(f)vf(e_y)}
\begin{align*}
\varphi(e_x)\tl\psi(f)\vf(e_Z)\tl\psi(g)\vf(e_y) & = \sum_{z\in Z}\varphi(e_x)\vf(f)\vf(e_z)\vf(g)\vf(e_y)\\
& = \varphi(e_x)\vf(f)\vf(e_Z)\vf(g)\vf(e_y),
\end{align*}
proving that
$$
\vf(e_x)\tl\psi(fg)\vf(e_y)+\vf(e_y)\tl\psi(fg)\vf(e_x)= \vf(e_x)\tl\psi(f)\tl\psi(g)\vf(e_y)+\vf(e_y)\tl\psi(f)\tl\psi(g)\vf(e_x).
$$

Now using the fact that $fg\in FZ(X,R)$ and  \cref{vf(e_x)psi(f)vf(e_x)}, we get
$$
\vf(e_x)\tl\psi(fg)\vf(e_x)=0.
$$
On the other hand, 
\begin{align*}
\varphi(e_x)\tl\psi(f)\tl\psi(g)\vf(e_x)=\varphi(e_x)\tl\psi(f)\varphi(\delta)\tl\psi(g)\vf(e_x)=\varphi(e_x)\tl\psi(f)\vf(e_X)\tl\psi(g)\vf(e_x),
\end{align*}
which is zero thanks to \cref{vf(e_x)vf(f)vf(e_X-[xy])vf(g)vf(e_y)=0}. The result now follows from \cref{a=b-in-A}.
\end{proof}

\begin{prop} \label{psi-homo}
Let $\vf:FI(X,R)\to R$ be a Jordan isomorphism and $\tl\psi:FI(X,R)\to A$ as defined in \cref{identities-for-psi}. Then $\tl\psi$ is a homomorphism.
\end{prop}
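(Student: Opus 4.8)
The plan is to reduce the multiplicativity of $\tl\psi$ on all of $FI(X,R)$ to the four special cases already settled, exploiting the $R$-module decomposition $FI(X,R)=D(X,R)\oplus FZ(X,R)$ together with the $R$-linearity of $\tl\psi$ established in \cref{identities-for-psi}. So I would begin by taking arbitrary $f,g\in FI(X,R)$ and writing $f=f_D+f_Z$, $g=g_D+g_Z$ with $f_D,g_D\in D(X,R)$ and $f_Z,g_Z\in FZ(X,R)$. Expanding bilinearly,
\[
fg=f_Dg_D+f_Dg_Z+f_Zg_D+f_Zg_Z,
\]
and since $\tl\psi$ is $R$-linear, $\tl\psi(fg)$ is the sum of $\tl\psi$ applied to these four summands (note that each of $f_Dg_D\in D(X,R)$, and $f_Dg_Z,f_Zg_D,f_Zg_Z\in FZ(X,R)$, so the decomposition of $fg$ is already exhibited, though this is not actually needed).

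Next I would dispatch the four terms individually. The term $\tl\psi(f_Dg_D)=\tl\psi(f_D)\tl\psi(g_D)$ because $\tl\psi|_{D(X,R)}=\vf|_{D(X,R)}$, which is a homomorphism by \cref{vf|_D-homo}; the term $\tl\psi(f_Dg_Z)=\tl\psi(f_D)\tl\psi(g_Z)$ by \cref{f-in-D-and-g-in-Z}; the term $\tl\psi(f_Zg_D)=\tl\psi(f_Z)\tl\psi(g_D)$ by \cref{f-in-Z-and-g-in-D}; and the term $\tl\psi(f_Zg_Z)=\tl\psi(f_Z)\tl\psi(g_Z)$ by \cref{f-in-Z-and-g-in-Z}. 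Adding these four equalities and regrouping, the right-hand side becomes
\[
\bigl(\tl\psi(f_D)+\tl\psi(f_Z)\bigr)\bigl(\tl\psi(g_D)+\tl\psi(g_Z)\bigr)=\tl\psi(f)\tl\psi(g),
\]
where the last step uses the $R$-linearity of $\tl\psi$ once more. Combined with that linearity, this shows $\tl\psi$ is an $R$-algebra homomorphism.

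I do not expect any genuine obstacle at this stage: all of the difficulty has already been absorbed into \cref{f-in-D-and-g-in-Z,f-in-Z-and-g-in-D,f-in-Z-and-g-in-Z} (and, through them, into \cref{vf(e_x)vf(f)vf(e_X-[xy])vf(g)vf(e_y)=0,identities-for-psi}), so the present argument is a routine bilinearity bookkeeping. If one wishes, one may additionally remark that $\tl\psi(\delta)=\vf(\delta)$ is an identity for $\tl\psi\bigl(FI(X,R)\bigr)$, but this is not required for $\tl\psi$ to be a homomorphism.
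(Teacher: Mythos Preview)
Your proposal is correct and follows essentially the same approach as the paper's own proof: decompose $f$ and $g$ into their $D$- and $FZ$-components, expand $\tl\psi(fg)$ by linearity, handle the four resulting terms via \cref{vf|_D-homo,f-in-D-and-g-in-Z,f-in-Z-and-g-in-D,f-in-Z-and-g-in-Z}, and regroup. There is no substantive difference between your argument and the paper's.
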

\begin{proof}
Consider $f,g \in FI(X,R)$ and write $f=f_D+f_Z$ and $g=g_D+g_Z$. Using the definition of $\tl\psi$, its linearity, \cref{f-in-D-and-g-in-Z,f-in-Z-and-g-in-D,vf|_D-homo,f-in-Z-and-g-in-Z}, we get
\begin{align*}
\tl\psi(fg) & = \tl\psi((f_D+f_Z)(g_D+g_Z))\\ 
 & = \tl\psi(f_Dg_D+(f_Dg_Z+f_Zg_D+f_Zg_Z))\\
 & = \vf(f_Dg_D)+ \tl\psi(f_Dg_Z+f_Zg_D+f_Zg_Z)\\ 
 & = \vf(f_D)\vf(g_D)+\tl\psi(f_Dg_Z)+ \tl\psi(f_Zg_D)+ \tl\psi(f_Zg_Z)\\
 & = \vf(f_D)\vf(g_D)+\tl\psi(f_D)\tl\psi(g_Z)+ \tl\psi(f_Z)\tl\psi(g_D)+ \tl\psi(f_Z)\tl\psi(g_Z)\\
 & = \vf(f_D)\vf(g_D)+\vf(f_D)\tl\psi(g_Z)+ \tl\psi(f_Z)\vf(g_D)+ \tl\psi(f_Z)\tl\psi(g_Z)\\
 & =(\vf(f_D)+\tl\psi(f_Z))(\vf(g_D)+\tl\psi(g_Z))\\ 
 & = \tl\psi(f)\tl\psi(g).
\end{align*}
\end{proof}

\subsection{A decomposition of \texorpdfstring{$\vf$}{phi} into a near-sum}\label{decomp-of-phi}

Similarly to what was done in \cref{identities-for-psi} for $\psi$, one can extend $\0$ defined in \cref{theta(e_xy)} to an anti-homomorphism $FI(X,R)\to A$.

\begin{prop}\label{identities-for-theta}
Let $\vf: FI(X,R)\to A$ be a Jordan isomorphism and $\0$ be given by \cref{theta(e_xy)}. Then there exists an extension $\tl\0$ of $\0$ to an anti-homomorphism $FI(X,R)\to A$. Moreover, for any $f\in FZ(X,R)$ one has
\begin{align}
  \forall x<y:\ \vf(e_y)\tl\0(f)\vf(e_x)&=\vf(e_y)\vf(f)\vf(e_x),\label{vf(e_y)theta(f)vf(e_x)}\\
  \forall x<y:\ \vf(e_x)\tl\0(f)\vf(e_y)&=0,\label{vf(e_x)theta(f)vf(e_y)}\\
  \forall x:\ \vf(e_x)\tl\0(f)\vf(e_x)&=0.\label{vf(e_x)theta(f)vf(e_x)}
\end{align}
\end{prop}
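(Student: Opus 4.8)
The plan is to obtain \cref{identities-for-theta} for free from what has already been established about $\psi$, by passing to the opposite algebra. Denote by $A^{\mathrm{op}}$ the $R$-algebra whose underlying $R$-module is that of $A$ and whose multiplication is $a\ast b:=ba$. The identities \cref{vf(a^2),vf(aba)} defining a Jordan homomorphism are invariant under reversing all products (for three factors, $\vf(a)\ast\vf(b)\ast\vf(a)=\vf(a)\vf(b)\vf(a)$), so the same $R$-linear bijection $\vf$ is also a Jordan isomorphism $FI(X,R)\to A^{\mathrm{op}}$. Feeding this into the recipe \cref{psi(e_xy)} carried out inside $A^{\mathrm{op}}$ produces the map $e_{xy}\mapsto\vf(e_x)\ast\vf(e_{xy})\ast\vf(e_y)=\vf(e_y)\vf(e_{xy})\vf(e_x)=\0(e_{xy})$; in other words, the ``$\psi$ computed in $A^{\mathrm{op}}$'' is exactly our $\0$.

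Now I would simply invoke \cref{identities-for-psi} and \cref{psi-homo} with $A$ replaced by $A^{\mathrm{op}}$. \cref{identities-for-psi} then furnishes an $R$-linear extension $\tl\0$ of $\0$ to all of $FI(X,R)$ and, for every $f\in FZ(X,R)$, the equalities $\vf(e_x)\ast\tl\0(f)\ast\vf(e_y)=\vf(e_x)\ast\vf(f)\ast\vf(e_y)$ and $\vf(e_y)\ast\tl\0(f)\ast\vf(e_x)=0$ for $x<y$, together with $\vf(e_x)\ast\tl\0(f)\ast\vf(e_x)=0$. Rewriting $a\ast b$ as $ba$ turns these into precisely \cref{vf(e_y)theta(f)vf(e_x),vf(e_x)theta(f)vf(e_y),vf(e_x)theta(f)vf(e_x)}. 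Likewise, \cref{psi-homo} applied in $A^{\mathrm{op}}$ asserts that $\tl\0$ is a homomorphism $FI(X,R)\to A^{\mathrm{op}}$, which is the same as saying that $\tl\0$ is an anti-homomorphism $FI(X,R)\to A$.

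If one prefers to avoid the language of opposite algebras, the same can be done by literally transcribing the proofs of \cref{f(xy)psi(e_xy)-and-f(xy)theta(e_xy),identities-for-psi,f-in-D-and-g-in-Z,f-in-Z-and-g-in-D,vf(e_x)vf(f)vf(e_X-[xy])vf(g)vf(e_y)=0,f-in-Z-and-g-in-Z,psi-homo}, everywhere interchanging the left and right multipliers $\vf(e_x)\leftrightarrow\vf(e_y)$ and reading ``anti-homomorphism'' for ``homomorphism'' (so that, for instance, the end product becomes $\tl\0(fg)=\tl\0(g)\tl\0(f)$). In detail: for $f\in FZ(X,R)$ one sets $b_{xy}=\vf(e_y)\vf(f)\vf(e_x)$, observes $b_{xx}=0$ by \cref{f(xx)vf(e_x)=vf(e_x)vf(f)vf(e_x)} and $\vf\m(b_{xy})=\vf\m(b_{xy})(x,y)e_{xy}$ exactly as in \cref{vf^(-1)(a_xy)}, defines $h\in FZ(X,R)$ by $h(x,y)=\vf\m(b_{xy})(x,y)$ — its finitariness following from the $\0$-version of \cref{vf(e_x)vf(f)vf(e_y)}, namely $\vf(e_y)\vf(f)\vf(e_x)=f(x,y)\0(e_{xy})$, which one reads off \cref{f(xy)vf(e_xy)=vf(e_x)vf(f)vf(e_y)+vf(e_y)vf(f)vf(e_x),theta(e_xy)} — and puts $\tl\0(f)=\vf(h)$, extended $R$-linearly via $\tl\0(f)=\vf(f_D)+\tl\0(f_Z)$.

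I do not expect a genuinely new obstacle here: every step of the $\psi$-story is the image under the anti-isomorphism $A\to A^{\mathrm{op}}$ of the corresponding step for $\0$, so the whole argument goes through verbatim. The only points that deserve the same modest care as in the original proofs are the mirror image of \cref{vf(e_x)vf(f)vf(e_X-[xy])vf(g)vf(e_y)=0} — where the finite set used to cut off the middle idempotent $e_W$ must be enlarged to contain the supports of the auxiliary series $f',g'$ with $\tl\0(f)=\vf(f')$, $\tl\0(g)=\vf(g')$ — and the mirror images of the reductions $f\mapsto f_{>x}$, $g\mapsto g_{<x}$ from \cref{f_>x,f_<x} used inside that lemma; once these are set up correctly, \cref{a=b-in-A,vf(abcde+...)} together with \cref{vf|_D-homo} (which is symmetric in homomorphism/anti-homomorphism) close the case $f,g\in FZ(X,R)$ just as before, and the general multiplicativity (in reversed order) of $\tl\0$ follows by the same bookkeeping as in \cref{psi-homo}.
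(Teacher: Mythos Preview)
Your proposal is correct and amounts to the same argument as the paper's: the paper simply writes down the mirror-image construction $b_{xy}=\vf(e_y)\vf(f)\vf(e_x)$, $h(x,y)=\vf\m(b_{xy})(x,y)$, $\tl\0(f)=\vf(h)$ (extended by $\tl\0(f)=\vf(f_D)+\tl\0(f_Z)$) and then declares the remaining verifications ``analogous'' to those for $\tl\psi$. Your opposite-algebra repackaging is a clean way to make that analogy rigorous in one stroke rather than by transcribing every lemma, and your literal-transcription alternative is exactly what the paper has in mind.
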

\begin{proof}
Given $f\in FZ(X,R)$ and $x\le y$, set
\begin{align*}
  b_{xy}=\vf(e_y)\vf(f)\vf(e_x)\in A
\end{align*}
and define $h\in I(X,R)$ by 
\begin{align}\label{h(xy)}
h(x,y)=\vf\m(b_{xy})(x,y),
\end{align}
for all $x\leq y$. Observe as in the proof of \cref{identities-for-psi} that $h\in FZ(X,R)$. Define
\begin{align*}
 \tl\0(f)=\vf(h).
\end{align*}
In the general situation, when $f\in FI(X,R)$, write $f=f_D+f_Z$ and thus set $\tl\theta(f)=\vf(f_D)+\tl\theta(f_Z)$.

The proof that $\tl\theta$ is an anti-homomorphism satisfying \cref{vf(e_y)theta(f)vf(e_x),vf(e_x)theta(f)vf(e_y),vf(e_x)theta(f)vf(e_x)} 
is analogous to what was done for $\tl\psi$ (see \cref{identities-for-psi,f-in-D-and-g-in-Z,f-in-Z-and-g-in-D,vf(e_x)vf(f)vf(e_X-[xy])vf(g)vf(e_y)=0,f-in-Z-and-g-in-Z,psi-homo}).
\end{proof}

Finally, we are ready to prove the main result of this paper.

\begin{thrm}\label{near-sum}
Each Jordan isomorphism $\vf:FI(X,R)\to A$ is the near-sum of $\tl\psi$ and $\tl\theta$ with respect to the subalgebra $D(X,R)$ and the ideal $FZ(X,R)$, where $\tl\psi$ and $\tl\theta$ are defined in \cref{identities-for-psi,identities-for-theta}, respectively.
\end{thrm}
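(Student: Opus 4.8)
The plan is to check the conditions in the definition of a \emph{near-sum} one by one for the pair $(\tl\psi,\tl\0)$ with respect to the subalgebra $D(X,R)$ and the ideal $FZ(X,R)$. The decomposition $FI(X,R)=D(X,R)\oplus FZ(X,R)$ into the $R$-module direct sum of a subalgebra and an ideal was recorded in \cref{prelim}; $\tl\psi$ is a homomorphism by \cref{psi-homo} and $\tl\0$ an anti-homomorphism by \cref{identities-for-theta}; and both $\tl\psi$ and $\tl\0$ coincide with $\vf$ on $D(X,R)$ by their very construction, so that $\tl\psi|_{D(X,R)}=\tl\0|_{D(X,R)}$ and $\vf|_{D(X,R)}=\tl\psi|_{D(X,R)}$. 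Thus only two things remain: (i) $\vf(f)=\tl\psi(f)+\tl\0(f)$ for every $f\in FZ(X,R)$, and (ii) $\tl\psi(f)\tl\0(g)=\tl\0(f)\tl\psi(g)=0$ for all $f,g\in FZ(X,R)$.

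For (i) I would apply \cref{a=b-in-A} with $a=\vf(f)$ and $b=\tl\psi(f)+\tl\0(f)$. For $x<y$, expanding $\vf(e_x)b\vf(e_y)+\vf(e_y)b\vf(e_x)$ into its four summands and using \cref{vf(e_x)psi(f)vf(e_y),vf(e_y)psi(f)vf(e_x)} on the $\tl\psi$-part and \cref{vf(e_x)theta(f)vf(e_y),vf(e_y)theta(f)vf(e_x)} on the $\tl\0$-part, the two ``wrong-sided'' summands vanish and the remaining two add up to $\vf(e_x)\vf(f)\vf(e_y)+\vf(e_y)\vf(f)\vf(e_x)$. For $x=y$ both sides are zero, since $\vf(e_x)\vf(f)\vf(e_x)=f(x,x)\vf(e_x)=0$ (as $f\in FZ(X,R)$, by \cref{f(xx)vf(e_x)=vf(e_x)vf(f)vf(e_x)}) and $\vf(e_x)\tl\psi(f)\vf(e_x)=\vf(e_x)\tl\0(f)\vf(e_x)=0$ by \cref{vf(e_x)psi(f)vf(e_x),vf(e_x)theta(f)vf(e_x)}. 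So \cref{a=b-in-A} gives $\vf(f)=\tl\psi(f)+\tl\0(f)$.

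The substance of the proof is (ii). The key external fact is that $\vf|_{\tl I(X,R)}$ is the near-sum of $\psi$ and $\0$, so $\psi(\alpha)\0(\beta)=\0(\alpha)\psi(\beta)=0$ for all $\alpha,\beta\in\tl Z(X,R)$; in particular $\psi(e_{uv})\0(e_{pq})=0$ whenever $u<v$ and $p<q$. To prove $\tl\psi(f)\tl\0(g)=0$ I would again use \cref{a=b-in-A}. Write $\tl\psi(f)=\vf(f')$ and $\tl\0(g)=\vf(g')$ with $f',g'\in FZ(X,R)$ (as in \cref{identities-for-psi,identities-for-theta}) and recall that $\vf(\delta)$ is the identity of $A$. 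The decisive pointwise observation is that for every $z\in X$ and all $x\le y$
\[
\vf(e_x)\tl\psi(f)\vf(e_z)\tl\0(g)\vf(e_y)=0,
\]
and likewise with $x,y$ interchanged and with $y$ replaced by $x$. Indeed, the factor $\vf(e_x)\tl\psi(f)\vf(e_z)$ equals $f(x,z)\psi(e_{xz})$ and is zero unless $x<z$ (by \cref{vf(e_x)psi(f)vf(e_y),vf(e_x)psi(f)vf(e_x),vf(e_y)psi(f)vf(e_x),vf(e_x)vf(f)vf(e_y)}, using \cref{vf(abc+cba),vf(e)vf(a)=vf(a)vf(e)} when $x,z$ are incomparable), while the factor $\vf(e_z)\tl\0(g)\vf(e_y)$ is a scalar multiple of $\0(e_{yz})$ and is zero unless $y<z$ (by the identities of \cref{identities-for-theta} together with \cref{e_xfe_y,vf(abc+cba),vf(e)vf(a)=vf(a)vf(e),vf(e_x)vf(f)vf(e_y)}); when both survive, $x<z$ and $y<z$, so the product carries the factor $\psi(e_{xz})\0(e_{yz})=0$. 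Now insert $\vf(\delta)=\vf(e_X)$ between $\tl\psi(f)$ and $\tl\0(g)$ and split $e_X=e_Z+e_{X\setminus Z}$ for a finite $Z\subseteq X$ containing all the finitely many indices of $f,g,f',g'$ relevant to the intervals involved: the $\vf(e_Z)$-contribution is a finite sum of terms of the above shape, hence $0$, and the $\vf(e_{X\setminus Z})$-contribution $\vf(e_x)\tl\psi(f)\vf(e_{X\setminus Z})\tl\0(g)\vf(e_y)$ vanishes by an argument entirely parallel to \cref{vf(e_x)vf(f)vf(e_X-[xy])vf(g)vf(e_y)=0}: one first replaces $\tl\psi(f)$ by its ``row-$x$'' truncation via \cref{vf(f)-to-vf(f_>x)} and $\tl\0(g)$ by the analogous one, then invokes the five-term identity \cref{vf(abcde+...)} exactly as in that lemma. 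The diagonal entries $\vf(e_x)\tl\psi(f)\tl\0(g)\vf(e_x)$ are treated the same way, the surviving factor being $\psi(e_{xz})\0(e_{xz})=0$. Hence $\tl\psi(f)\tl\0(g)=0$ by \cref{a=b-in-A}, and $\tl\0(f)\tl\psi(g)=0$ follows by the mirror-image computation, whose surviving factor is $\0(e_{zx})\psi(e_{zy})$ (or $\0(e_{zx})\psi(e_{zx})$ on the diagonal), again zero.

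With (i) and (ii) established, every clause of the definition of a near-sum holds, so $\vf$ is by definition the near-sum of $\tl\psi$ and $\tl\0$ with respect to $D(X,R)$ and $FZ(X,R)$. The step I expect to be the main obstacle is precisely the passage from the pointwise vanishing displayed above to the vanishing of the whole product $\vf(e_x)\tl\psi(f)\tl\0(g)\vf(e_y)$: since $\delta$ is not a finite sum of the idempotents $e_z$, this needs a ``finitary tail'' argument of exactly the type that \cref{vf(e_x)vf(f)vf(e_X-[xy])vf(g)vf(e_y)=0} provides, transported from the $\tl\psi$--$\tl\psi$ setting to the $\tl\psi$--$\tl\0$ and $\tl\0$--$\tl\psi$ settings. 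Everything else is routine bookkeeping with the identities already proved in \cref{identities-for-psi,identities-for-theta} and the elementary observations of \cref{prelim}.
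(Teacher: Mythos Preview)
Your proposal is correct and follows essentially the same strategy as the paper: verify the near-sum conditions, handle the product $\tl\psi(f)\tl\0(g)$ via \cref{a=b-in-A} after inserting $\vf(\delta)=\vf(e_Z)+\vf(e_{X\setminus Z})$, and kill the tail by the argument of \cref{vf(e_x)vf(f)vf(e_X-[xy])vf(g)vf(e_y)=0}. The only differences are cosmetic: for (i) the paper computes $(g+h)(x,y)=f(x,y)$ directly from the definitions of $g,h$ rather than invoking \cref{a=b-in-A}; and for the finite $e_Z$-sum in (ii) the paper takes $Z\subseteq[x,y]$, so that $\vf(e_z)\tl\0(f')\vf(e_y)=0$ and $\vf(e_y)\tl\psi(f)\vf(e_z)=0$ follow already from \cref{vf(e_y)psi(f)vf(e_x),vf(e_x)psi(f)vf(e_x),vf(e_x)theta(f)vf(e_y),vf(e_x)theta(f)vf(e_x)} without needing the relation $\psi(e_{xz})\0(e_{yz})=0$ from $\tl I(X,R)$.
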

\begin{proof}
By definition, $\tl\psi|_{D(X,R)}=\tl\theta|_{D(X,R)}=\vf|_{D(X,R)}$. Now, for $f\in FZ(X,R)$, $\tl\psi(f)+\tl\theta(f)=\vf(g)+\vf(h)=\vf(g+h)$, where $g$ and $h$ are defined in \cref{g(xy),h(xy)}. Since $\vf\m$, being a Jordan isomorphism, satisfies \cref{vf(abc+cba)}, for all $x\leq y$,
\begin{align*}
(g+h)(x,y) & = g(x,y)+h(x,y)=\vf^{-1}(a_{xy})(x,y)+\vf^{-1}(b_{xy})(x,y)\\
 & = [\vf^{-1}(a_{xy})+\vf^{-1}(b_{xy})](x,y)=\vf^{-1}(a_{xy}+b_{xy})(x,y)\\
 & = [\vf^{-1}(\vf(e_{x})\vf(f)\vf({e_y})+\vf(e_y)\vf(f)\vf(e_x))](x,y)\\
 & = [e_xfe_y+e_yfe_x](x,y) = f(x,y).
\end{align*}
Thus $\vf(f)=\tl\psi(f)+\tl\theta(f)$, for all $f\in FZ(X,R)$, i.\,e. $\vf|_{FZ(X,R)}=\tl\psi|_{FZ(X,R)}+\tl\theta|_{FZ(X,R)}$.

It remains to check that the products $\tl\psi(f)\tl\0(f')$ and $\tl\0(f')\tl\psi(f)$ are zero for all $f,f'\in FZ(X,R)$. By symmetry, it is enough to show that $\tl\psi(f)\tl\0(f')=0$. To this end we use \cref{a=b-in-A}.

Let $x<y$. Using the fact that $\tl\psi(f)=\vf(g)$ and $\tl\0(f')=\vf(h')$, where $g$ and $h'$ are defined in \cref{g(xy),h(xy)}, we represent
\begin{align}\label{vf(e_x)vf(g)vf(h')vf(e_y)+vf(e_y)vf(g)vf(h')vf(e_x)}
\vf(e_x)\tl\psi(f)\tl\0(f')\vf(e_y)+\vf(e_y)\tl\psi(f)\tl\0(f')\vf(e_x)
\end{align}
as
\begin{align}
 &\sum_{z\in Z}\vf(e_x)\vf(g)\vf(e_z)\vf(h')\vf(e_y)
 +\vf(e_x)\vf(g)\vf(e_{X\setminus Z})\vf(h')\vf(e_y)\notag\\
 &\quad+\sum_{z\in Z}\vf(e_y)\vf(g)\vf(e_z)\vf(h')\vf(e_x)
 +\vf(e_y)\vf(g)\vf(e_{X\setminus Z})\vf(h')\vf(e_x),\label{vf(e_x)vf(g)vf(h')vf(e_y)+vf(e_y)vf(g)vf(h')vf(e_x)-expanded}
\end{align}
where $Z$ is a finite subset of $[x,y]$ which contains $\{z\in[x,y]\mid f(x,z)\ne 0\ne g(z,y)\}$. As in the proof of \cref{vf(e_x)vf(f)vf(e_X-[xy])vf(g)vf(e_y)=0} the second and fourth summands of \cref{vf(e_x)vf(g)vf(h')vf(e_y)+vf(e_y)vf(g)vf(h')vf(e_x)-expanded} are zero.
Moreover, $\vf(e_z)\vf(h')\vf(e_y)=\vf(e_z)\tl\theta(f')\vf(e_y)=0$ for all $z\in Z$ thanks to \cref{vf(e_x)theta(f)vf(e_y),vf(e_x)theta(f)vf(e_x)}. Similarly, $\vf(e_y)\vf(g)\vf(e_z)=\vf(e_y)\tl\psi(f)\vf(e_z)=0$ for all $z\in Z$
in view of \cref{vf(e_y)psi(f)vf(e_x),vf(e_x)psi(f)vf(e_x)}. Therefore, the first and third summands of \cref{vf(e_x)vf(g)vf(h')vf(e_y)+vf(e_y)vf(g)vf(h')vf(e_x)-expanded} are also zero, yielding that \cref{vf(e_x)vf(g)vf(h')vf(e_y)+vf(e_y)vf(g)vf(h')vf(e_x)} equals $0$. 

Now take $x\in X$ and write 
\begin{align}\label{vf(e_x)psi(f)0(f')vf(e_x)}
\vf(e_x)\tl\psi(f)\tl\0(f')\vf(e_x)=\vf(e_x)\tl\psi(f)\vf(e_X)\tl\0(f')\vf(e_x).
\end{align}
To prove that the right-hand side of \cref{vf(e_x)psi(f)0(f')vf(e_x)} is zero, we observe as in the proof of \cref{vf(e_x)vf(f)vf(e_X-[xy])vf(g)vf(e_y)=0} that
\begin{align}
\vf(e_x)\tl\theta(f')\vf(e_X)&=\vf(e_x)\tl\theta(f'_{<x})\vf(e_X),\label{0(f)-to-0(f_>x)}\\
\vf(e_X)\tl\theta(f')\vf(e_x)&=\vf(e_X)\tl\theta(f'_{>x})\vf(e_x)\label{0(f)-to-0(f_<x)}
\end{align}
(see \cref{vf(f)-to-vf(f_>x),vf(f)-to-vf(f_<x)}). It follows from \cref{vf(f)-to-vf(f_>x),0(f)-to-0(f_<x)} that
\begin{align}\label{psi_>0_>}
\vf(e_x)\tl\psi(f)\vf(e_X)\tl\0(f')\vf(e_x)=\vf(e_x)\tl\psi(f_{>x})\vf(e_X)\tl\0(f'_{>x})\vf(e_x).
\end{align}
Similarly \cref{vf(f)-to-vf(f_<x),0(f)-to-0(f_>x)} yield
\begin{align}\label{0_<psi_<}
\vf(e_x)\tl\theta(f')\vf(e_X)\tl\psi(f)\vf(e_x)=\vf(e_x)\tl\0(f'_{<x})\vf(e_X)\tl\psi(f_{<x})\vf(e_x).
\end{align}
Observe as in \cref{psi><} that
\begin{align}\label{switch-psi-and-0}
\vf(e_x)\tl\psi(f_{>x})\vf(e_X)\tl\theta(f'_{>x})\vf(e_x)+\vf(e_x)\tl\theta(f'_{>x})\vf(e_X)\tl\psi(f_{>x})\vf(e_x)=0.
\end{align}
But $\vf(e_x)\tl\theta(f'_{>x})\vf(e_X)\tl\psi(f_{>x})\vf(e_x)$ equals
$$
\vf(e_x)\tl\theta((f'_{>x})_{<x})\vf(e_X)\tl\psi((f_{>x})_{<x})\vf(e_x)=0
$$
in view of \cref{0_<psi_<,f_<x,f_>x}. Thus, $\vf(e_x)\tl\psi(f)\vf(e_X)\tl\0(f')\vf(e_x)=0$
by \cref{psi_>0_>,switch-psi-and-0}, as desired.
\end{proof}

\section*{Acknowledgments}
The first and second authors were partially supported by Funda\c{c}\~ao Arauc\'aria, Conv\^enio 212/14. The third author thanks the Department of Mathematics of Maring\'a State University for its warm hospitality and financial support.

\bibliography{bibl}{}
\bibliographystyle{acm}

\end{document}